\providecommand\mathbb[1]{\ensuremath{\mathsf{#1}}}
\newcommand\ZZ{\mathbb{Z}}
\newcommand{\RR}{\mathbb{R}}
\newcommand{\NN}{\mathbb{N}}
\newcommand{\CC}{\mathbb{C}}
\newcommand{\FF}{\mathbb{F}}
\newcommand{\QQ}{\mathbb{Q}}
\newcommand{\PP}{\mathbb{P}}
\newcommand{\GL}{\text{GL}}
\newcommand{\M}{\text{M}}
\newcommand\rk{\text{rk}}
\newcommand\cok{\text{cok}\,}
\newcommand{\diag}{\text{diag}}
\newcommand{\T}{^{\top}}
\newcommand{\brackets}[1]{\left(#1\right)}
\renewcommand{\mod}{\text{\ mod\,}}
\newtheorem{thm}{Theorem}[section]
\newtheorem{lem}[thm]{Lemma}
\newtheorem{prop}[thm]{Proposition}
\newtheorem{cor}[thm]{Corollary} 
\newtheorem{remark}[thm]{Remark}
\title[Distribution of classes of symmetric p-adic matrices]{On the distribution of equivalence classes of random symmetric p-adic matrices}
\author{Valeriya Kovaleva}
\address{D{\'e}partment  de Math{\'e}matiques et Statistique,   Universit{\'e} de Montr{\'e}al, Montr{\'e}al, QC  H3C 3J7, Canada}
\email{valkovaleva42@gmail.com}
\begin{document}
\maketitle
\begin{abstract}

We consider random symmetric matrices with independent entries distributed according to the Haar measure on $\mathbb{Z}_p$ for odd primes $p$ and derive the distribution of their canonical form with respect to several equivalence relations. We give a few examples of applications including an alternative proof for the result of Bhargava, Cremona, Fisher, Jones, and Keating on the probability that a random quadratic form over $\ZZ_p$ has a non-trivial zero.
\end{abstract}

\section{Introduction}

Let $p$ be prime, and let $\ZZ_p$ be the ring of $p$-adic integers. Denote the set of $n \times n$ matrices over $\ZZ_p$ by $\M_n(\ZZ_p)$, and its subset of invertible matrices by $K:= \GL_n(\ZZ_p)$. For any matrix $X \in \M_n(\ZZ_p)$ there exist matrices $U, V \in K$ such that there is a decomposition 
\[
X = U \Sigma V\,,
\]
where $\Sigma = \text{diag}(p^{k_1}, \ldots, p^{k_n})$ and $k_i$ are the unique integers satisfying $0~\le~k_1~\le~\ldots~\le~ k_n\le~+~\infty$. The numbers $p^{k_j}$ define the equivalence class of $X$ with respect to left and right multiplication by $\GL_n(\ZZ_p)$. These numbers called the elementary divisors of $X$ are analogous to the singular values of complex matrices.

Let $X \in \M_n(\ZZ_p)$ be a random matrix with independent entries distributed according to the Haar measure on $\ZZ_p$. This entry-wise measure on $\M_n(\ZZ_p)$ is in fact the (multiplicative and additive) Haar measure on $\M_n(\ZZ_p)$ as a group. The elementary divisors of $X$ are then random variables, and the probability that a random matrix $X$ is equivalent to $\Sigma$ is the measure of the double coset $K \Sigma K$.

Elementary divisors come up in many different contexts, and in each one of them they exhibit a different kind of structure. First, their joint distribution was studied by Macdonald \cite{macdonald1998symmetric} from the point of view of symmetric functions and the Hecke ring of $\GL_n$. It follows from his work that for $\mathbf{k} = (k_1, \ldots, k_n)$ and $\Sigma = \text{diag} (p^{\mathbf{k}})$ the measure of the double coset $K \Sigma K$ equals
\begin{equation}\label{gensymf}
    \pi_n\ p^{-\sum_i (n-i) k_i} P_{\mathbf{k}}(p^{-1}, p^{-2}, \ldots,p^{-n};p^{-1})\,,
\end{equation}
where $\pi_n := p^{-n^2} |\GL_n(\FF_p)|$ is the density of invertible $n\times n$ matrices over a finite field $\FF_p$ and $P_{\mathbf{k}}(x_1, \ldots, x_n;t)$ is the Hall-Littlewood polynomial of the partition $\mathbf{k}$. If matrix $X$ has elementary divisors $p^{k_1},\ldots,p^{k_n}$, then its cokernel $\text{cok}\, X$ satisfies
\[
\text{cok}\, X \cong \bigoplus_{\substack{i = 1}}^n \ZZ/p^{k_i} \ZZ\,,
\]
so studying elementary divisors is equivalent to studying cokernels. Friedman and Washington \cite{friedman1987distribution} were the first to draw a connection between the distribution of cokernels of random matrices and the Cohen-Lenstra heuristics in their work on the distribution of class groups of curves over a finite field. They also showed that for a finite abelian $p$-group $H$ of $p$-rank $r$
\[
\PP\{\text{cok}\, X \cong H\} = \frac{\pi_n^2}{\pi_r\, |\text{Aut}\,(H)|}\,.
\]
This was generalised to other distributions of entries in the asymptotic case as $n$ grows by Maples \cite{maples2013cokernels} and Matchett Wood \cite{wood2019random}. 

Evans \cite{evans2002elementary} studied the distribution of elementary divisors from the probabilistic point of view similar to Brent and McKay \cite{brent1987determinants}, and Fulman \cite{fulman2002random}. He showed that one can construct a Markov chain from the multiplicities $m_k$ of elementary divisors, and represented the distribution
\[
\PP\{X \in K\Sigma K\} = \frac{\pi_n^2}{\pi_{m_0} \pi_{m_1}\cdots \pi_{m_k}\cdots} p^{-(n-m_0)^2 - (n-m_0-m_1)^2 - \ldots - (n-m_0- \ldots - m_k)^2 - \ldots}\,
\]
in the form
\[
\PP\{X \in K\Sigma K\} = \prod_{j=0}^\infty \frac{\pi^2_{l_{j}}}{\pi_{m_j} \pi_{l_j-m_j}^2}p^{-(l_j-m_j)^2}
\]
where $l_0 = n$ and $l_j = n-m_0 - \ldots-m_{j-1}$ for $j \ge 1$.

Finally, for a matrix $X \in K \Sigma K$ the matrix $\Sigma$ is sometimes referred as its Smith normal form, in this context see, for example, the work of Wang and Stanley \cite{wang2017smith}.

Restricting to symmetric matrices allows us to consider other similar types of decompositions. As general matrices represent bilinear forms, symmetric matrices represent quadratic forms, and this connection provides a helpful way to interpret the decompositions for symmetric matrices. 

Let $p$ be an odd prime, and let $S_n(\ZZ_p)$ be the set of symmetric $n \times n$ matrices over $\ZZ_p$. For any matrix $X \in S_n(\ZZ_p)$ there exists a matrix $U \in \GL_n(\ZZ_p)$ such that there is a decomposition 
\[
X = U \Sigma S U^\top\,,
\]
where $\Sigma = \diag(p^{k_1}, \ldots, p^{k_n})$ is the matrix of elementary divisors with $0\le k_1\le \ldots \le k_n$, and $S = \diag(a_1,\ldots,a_n)$ is the matrix of signatures. Let $r \in \ZZ_p^\times$ is a fixed non-square in $\ZZ_p$, then $a_i \in \{1,r\}$. Once put in the canonical form (depending on $\Sigma$), the signatures $a_i$ together with the elementary divisors define the $\ZZ_p$-equivalence class of $X$. This decomposition of course corresponds to diagonalising quadratic forms.

Studying $\ZZ_p$-equvalence classes is almost equivalent to studying the cokernel of $X$ with a given duality pairing. Clancy, Kaplan, Leake, Payne, and Matchett Wood \cite{clancy2015cohen} show that the probability that $X$ has a cokernel $H$ of $p$-rank $r$ with a fixed duality pairing $\delta$ equals
\begin{equation}\label{cokdual}
    \frac{\pi_n}{\beta_{n-r} |H| |\text{Aut}\,(H,\delta)|}\,,
\end{equation}
where $\beta_t := \prod_{i=1}^{\lfloor\frac{t}{2}\rfloor} (1-p^{-2i})$. The asymptotic distribution of cokernels of random symmetric $p$-adic matrices with independent entries was further studied by Matchett Wood \cite{wood2017distribution}.

Let $\QQ_p$ denote the field of $p$-adic numbers, and $\GL_n(\QQ_p)$ denote the set of all invertible matrices over $\QQ_p$. For any matrix $X \in S_n(\ZZ_p)$ there exists a matrix $U \in \GL_n(\QQ_p)$ such that there is a decomposition $X = U D U^\top$, where $D$ is the canonical form for the $\QQ_p$-equivalence class of $X$. This corresponds to the canonical form of quadratic forms over $\QQ_p$. The classification of quadratic forms over $\QQ_p$ is well known, see Cassels \cite{cassels2008rational}, but the distribution of such classes is much less studied. A related work in this direction is the paper of Bhargava, Cremona, Fisher, Jones, and Keating \cite{bhargava2016probability} on the probability of isotropy of integral quadratic forms.

In this paper we consider random symmetric matrices with entries distributed independently according to the Haar measure on $\ZZ_p$ for odd primes $p$, and compute the distribution of equivalence classes with respect to conjugation by $\GL_n(\ZZ_p)$ and $\GL_n(\QQ_p)$. This is a natural choice of measure for two reasons. First, this measure on $S_n(\ZZ_p)$ should be seen as the inverse limit of the counting measures on $S_n(\ZZ/p^k\ZZ)$ as $k \to \infty$. Second, the Haar measure on the ring of integers of a local field is analogous to the standard Gaussian measure over $\RR$ due to its invariance properties (for exposition see Evans \cite{evans2001local}). This makes such a random matrix the analogue of the Gaussian Ensemble of random matrix theory, so it is reasonable to expect that the asymptotic behaviour of such matrices is universal i.e. showcases the asymptotic behaviour of a larger class of random matrices. This turns out to be true as shown in the work of Matchett Wood \cite{wood2017distribution}. Additionally, due to the invariance of this measure on $S_n(\ZZ_p)$ with respect to conjugation by $\GL_n(\ZZ_p)$, it is an analogue of the Gaussian Orthogonal Ensemble (GOE). 

Our approach to studying symmetric matrices was motivated by classical random matrix theory and the ``local GOE'' analogy rather than the Cohen-Lenstra heuristics. We believe that the idea of reducing a certain class of problems to studying the joint distribution of eigenvalues or singular values may provide a more unified framework over different fields. 

The results of this paper are as follows. First, we give an elementary random matrix theory inspired proof for the distribution of $\ZZ_p$-equivalence classes. We note again that this distribution is essentially the same as \eqref{cokdual} with a small difference of taking into account the signature for $k_j=0$, so this result though independent is not entirely new.
\begin{thm}\label{intromain}
Let $p$ be an odd prime, and $X$ be a random $n \times n$ symmetric matrix with independent entries distributed uniformly on $\ZZ_p$. Let $\Sigma = \diag(p^{k_1},\ldots,p^{k_n})$ be its matrix of elementary divisors and $S = \text{diag}(1,\ldots, 1,a_0, \ldots,1,\ldots,1,a_k,\ldots)$ be an admissible matrix of signatures in the canonical form. Let $m_k = \#\{i:k_i=k\}$ and $s_k$ be the quadratic character mod $p$ at $a_k$. Then
\[
\PP\{X \in K\Sigma S K^\top\} =
\frac{\pi_n}{\alpha_{m_0}^{s_0} \alpha_{m_1}^{s_1}\cdots \alpha_{m_k}^{s_k}\cdots} \prod_{j=1}^{n} p^{-k_j(n-j+1)}\,,
\]
 where $\alpha_{j}^s = p^{-j(j-1)/2} |\text{O}_j^s(\FF_p)|$ is the density of the orthogonal group over $\FF_p$ for $j \ge 1$, and $\alpha_{0}^+ = 1$. 
\end{thm}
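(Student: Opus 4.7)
The plan is an induction on $n$, peeling off the $k=0$ block at each stage and applying the inductive hypothesis to the smaller matrix that remains. This is the symmetric analog of the Markov-chain / peeling argument of Evans, with the orthogonal group $\text{O}_{m_k}^{s_k}(\FF_p)$ playing the role that $\GL_{m_k}(\FF_p)$ plays in the non-symmetric setting.

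The first step is to reduce modulo $p$: the Haar measure on $S_n(\ZZ_p)$ pushes forward to the uniform measure on $S_n(\FF_p)$, so $\bar X$ is uniform; moreover $m_0$ equals the $\FF_p$-rank of $\bar X$, and $s_0$ is the isomorphism class of the induced non-degenerate form on $\FF_p^n/\ker\bar X$ (well-defined since $p$ is odd). I would count symmetric $n \times n$ matrices over $\FF_p$ of prescribed rank and signature by orbit-stabilizer for the action of $\GL_n(\FF_p)$ on $S_n(\FF_p)$: the stabilizer of a representative $\diag(A_0, 0)$ decomposes as $\text{O}_{m_0}^{s_0}(\FF_p)$ on the non-degenerate part, $\GL_{n-m_0}(\FF_p)$ on the radical, and an $m_0(n-m_0)$-dimensional unipotent part from the off-diagonal block. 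A direct computation then gives
\[
\PP(\bar X \text{ has rank } m_0 \text{ and signature } s_0) \;=\; \frac{\pi_n}{\alpha_{m_0}^{s_0}\, \pi_{n-m_0}}\, p^{-\binom{n-m_0+1}{2}}.
\]

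Next, conditional on this event, I would use a Schur-complement lift: writing $X = \begin{pmatrix} A_0 & B \\ B^\top & D \end{pmatrix}$ after permuting a non-degenerate mod-$p$ block to the top-left (and using $K$-invariance to identify its signature over $\ZZ_p$ with the signature $s_0$ over $\FF_p$, which matches because for odd $p$ the reduction $\ZZ_p^\times/(\ZZ_p^\times)^2 \to \FF_p^\times/(\FF_p^\times)^2$ is an isomorphism), the matrix $U = \begin{pmatrix} I & 0 \\ -B^\top A_0^{-1} & I \end{pmatrix}$ lies in $K$ and yields $UXU^\top = \diag(A_0, D - B^\top A_0^{-1} B)$, with $D - B^\top A_0^{-1} B \in p\,S_{n-m_0}(\ZZ_p)$ on our event. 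Setting $Y = (D - B^\top A_0^{-1} B)/p$ provides a measure-preserving bijection from the conditioning event (with its rescaled Haar measure) to $S_{n-m_0}(\ZZ_p)$ with Haar measure, so conditionally $Y$ is Haar-distributed; its elementary divisors are $(p^{k_{m_0+1}-1}, \dots, p^{k_n-1})$ and its multiplicity/signature sequence is $(m_1, s_1), (m_2, s_2), \dots$, so the inductive hypothesis applies to $Y$.

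The last step is to multiply the two factors and observe a telescoping: the $\pi_{n-m_0}$ in the stage-$0$ denominator cancels the $\pi_{n-m_0}$ in the inductive factor's numerator, and the $p^{-\binom{n-m_0+1}{2}}$ cancels the $p^{+\binom{n-m_0+1}{2}}$ arising from re-indexing $\prod_{j'=1}^{n-m_0} p^{-(k_{j'+m_0}-1)(n-m_0-j'+1)}$ as $\prod_{j=m_0+1}^n p^{-k_j(n-j+1)}$ (the low-index factors $j \le m_0$ are trivial since $k_j = 0$). One then recovers the stated formula. The main obstacle is verifying that the Schur-complement lift indeed produces a Haar-distributed $Y$ with the correct signature labels — specifically, that the canonical signature $s_k$ of $Y$ at each stage matches the theorem's canonical-form convention, and that the Hensel lift of the $\FF_p$-block decomposition introduces no spurious signature perturbations across blocks.
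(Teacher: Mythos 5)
Your argument is correct in substance but follows a genuinely different route from the paper. The paper fixes $\Sigma S$ and computes $\mu(K\Sigma S K^\top)$ directly as an orbit count over $\ZZ/p^m\ZZ$ (via Lemma \ref{lifteq}), the heart of the proof being the computation of the stabiliser $|C_{R_m}(\Sigma S)|$: one writes out the block system $U\Sigma S U^\top = \Sigma S$, solves it mod $p$, verifies the Jacobian has full row rank, and lifts with the multivariate Hensel lemma. You instead establish the one-step peeling recursion --- the probability of $(m_0,s_0)$ over $\FF_p$ by orbit--stabiliser, times the inductive factor for the Haar-distributed Schur complement $Y=(D-B^\top A_0^{-1}B)/p$ --- which is exactly the recurrence (\ref{recurr}) that the paper only derives \emph{from} Theorem \ref{symdensity} afterwards; your route proves the recurrence first and reads the closed form off it by telescoping. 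What your approach buys is that all the Hensel-type lifting is hidden in two standard facts (invertibility of $A_0$ and uniqueness of the Jordan splitting), and the only group computation needed is over $\FF_p$; what the paper's approach buys is the explicit stabiliser size over $\ZZ/p^m\ZZ$, which is of independent interest. Your numerics check out: the factor $\frac{\pi_n}{\alpha_{m_0}^{s_0}\pi_{n-m_0}}p^{-\binom{n-m_0+1}{2}}$ agrees with Corollary \ref{ffrank} after summing over $s_0$, and the telescoping of $\pi_{n-m_0}$ and of $p^{\binom{n-m_0+1}{2}}$ against the re-indexed product is exact.

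Three points need tightening. First, induction on $n$ alone does not terminate when $m_0=0$: peeling then returns a matrix of the same size, so you should either treat the scaling step $\PP\{X\in pK\Sigma'SK^\top\}=p^{-n(n+1)/2}\PP\{X\in K\Sigma'SK^\top\}$ separately (reducing to $k_1=0$) or induct on $n+k_n$. Second, ``permuting a non-degenerate block to the top-left'' is a matrix-dependent operation; the clean version is to condition on the exact reduction $\bar X\equiv\gamma\,\mathrm{diag}(\bar A_0,0)\,\gamma^\top\bmod p$ for each fixed $\gamma$, use invariance of the Haar measure on $S_n(\ZZ_p)$ under $X\mapsto\gamma X\gamma^\top$ for $\gamma\in K$, and observe the conditional law of $Y$ is the same on every fibre. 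Third, your flagged worry about spurious signature perturbations is resolved by the uniqueness (up to equivalence of each unimodular block) of the Jordan splitting $\bigoplus_k p^k M_k$ for odd $p$ (Cassels, Ch.~8), which the paper already invokes to define the canonical form; together with the fact that $\ZZ_p^\times/(\ZZ_p^\times)^2\to\FF_p^\times/(\FF_p^\times)^2$ is an isomorphism for odd $p$, this pins each $s_k$ down as you claim.
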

\begin{cor}\label{eldivcor}
For a random symmetric matrix defined as above
\[
\PP\{X \in K\Sigma K\} =
\frac{\pi_n}{\beta_{m_0}\cdots \beta_{m_k}\cdots} \prod_{j=1}^{n} p^{-k_j(n-j+1)}\,,
\]
where $\beta_t = ((\alpha_{j}^+)^{-1} + (\alpha_{j}^-)^{-1})^{-1} =  \prod_{i=1}^{\lfloor\frac{t}{2}\rfloor} (1-p^{-2i})$.
\end{cor}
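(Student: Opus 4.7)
The plan is to obtain Corollary \ref{eldivcor} by summing Theorem \ref{intromain} over all admissible canonical signature matrices $S$ attached to the fixed elementary divisor pattern $\Sigma$. Every non-singular symmetric matrix over $\ZZ_p$ admits a unique canonical pair $(\Sigma, S)$, so the event $\{X \in K\Sigma K\}$ of having elementary divisors $\Sigma$ partitions as the disjoint union of the events $\{X \in K\Sigma S K^\top\}$, with $S$ ranging over the admissible canonical signatures compatible with $\Sigma$. The factor $\pi_n \prod_{j=1}^n p^{-k_j(n-j+1)}$ in Theorem \ref{intromain} does not depend on $S$ and can be pulled out of the sum.

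For each block of equal elementary divisors with multiplicity $m_k \ge 1$, the admissibility condition permits exactly two choices $a_k \in \{1, r\}$, corresponding to $s_k \in \{+, -\}$; when $m_k = 0$ there is only the trivial choice $s_k = +$ with $\alpha_0^+ = 1$. The sum over $S$ therefore factorizes over the blocks as
\begin{equation*}
\sum_S \prod_k \frac{1}{\alpha_{m_k}^{s_k}} = \prod_{k:\, m_k \ge 1} \left( \frac{1}{\alpha_{m_k}^+} + \frac{1}{\alpha_{m_k}^-} \right),
\end{equation*}
so the entire proof reduces to the identity
\begin{equation*}
\frac{1}{\alpha_m^+} + \frac{1}{\alpha_m^-} = \frac{1}{\beta_m} \qquad (m \ge 1).
\end{equation*}

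I would verify this identity by direct substitution using the classical orders of the orthogonal groups over $\FF_p$ for odd $p$,
\begin{equation*}
|\text{O}_{2m}^{\pm}(\FF_p)| = 2 p^{m(m-1)}(p^m \mp 1) \prod_{i=1}^{m-1}(p^{2i}-1), \qquad |\text{O}_{2m+1}(\FF_p)| = 2 p^{m^2} \prod_{i=1}^m (p^{2i}-1).
\end{equation*}
In odd dimension both signatures yield groups of the same order, so the left-hand side of the identity is $2/\alpha_{2m+1}^+$ and reduces to $1/\beta_{2m+1}$ at once. In even dimension, the key step is the algebraic simplification $(p^m-1)^{-1} + (p^m+1)^{-1} = 2p^m/(p^{2m}-1)$, which supplies precisely the extra factor needed to promote $1/\beta_{2m-1}$ to $1/\beta_{2m}$.

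The main obstacle is purely bookkeeping: matching the powers of $p$ in the even- and odd-dimensional cases separately, and checking consistency at the edge $m = 0$, where $\beta_0 = 1$ (empty product) agrees with $\alpha_0^+ = 1$. No structural input beyond Theorem \ref{intromain} and the classical orthogonal group orders is required.
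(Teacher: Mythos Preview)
Your proposal is correct and follows essentially the same approach as the paper: sum Theorem \ref{intromain} over all admissible signature matrices $S$, pull out the $S$-independent factor, and use the identity $(\alpha_m^+)^{-1} + (\alpha_m^-)^{-1} = (\beta_m)^{-1}$. The paper simply states this identity, while you go further and verify it from the explicit orders of $\text{O}_m^{\pm}(\FF_p)$; note that the paper's labeling of the two orthogonal types involves $\varepsilon = \chi(-1)$, but since you only need the \emph{sum} over both types this discrepancy in conventions is harmless.
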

Note that we only consider odd primes to avoid the theory of quadratic forms in characteristic $2$. A similar statement should nonetheless hold in that case as well.

There are multiple elementary ways to prove Theorem \ref{intromain}. The one we suggest is as follows. Fix $\Sigma$ and reduce to a counting problem over $\ZZ/p^K\ZZ$ for some big $K$. Compute the size of the stabiliser of $\Sigma S$ in $\ZZ/p^K\ZZ$ by counting solutions to $U \Sigma S U^\top = \Sigma S$. This system is easy to solve over $\FF_p$, then  we can lift the solutions back to $\ZZ/p^K\ZZ$ and eventually $\ZZ_p$ via Hensel's lemma. In spirit, this is similar to computing the Jacobian of the transformation from the entry-wise to the eigenvalue-eigenvector coordinates as done for the Gaussian Orthogonal Ensemble. The proof can also be adapted to the case $p=2$, though we chose not to do that.

Further, we use the distribution of $\ZZ_p$-classes to obtain a closed expression for the distribution of $\QQ_p$-classes. This result is new. 

\begin{thm}
Let $p$ be an odd prime, and let $\varepsilon$ be the quadratic character mod $p$ at $-1$. Let $Q$ be a random quadratic form in $n$ variables with independent coefficients distributed according to the Haar measure on $\ZZ_p$. Let $d(Q) \in \QQ_p^\times/(\QQ_p^\times)^2$ be its discriminant and $c(Q) \in \{\pm 1\}$ be its Hasse invariant.  Then if $n$ is odd,
\[
\PP\{d(Q) = a,c(Q) = b\} = \begin{cases}
\frac{1}{4(1+1/p)} + b\, \frac{\pi_n}{4\beta_{n+1}\beta_{n-1}}\,, & \text{if $a \in \{1,r\}$}\,;\\
\\
\frac{1}{4p(1+1/p)} + b\, \varepsilon \big(\frac{\varepsilon}{p}\big)^{\frac{n+1}{2}} \frac{\pi_n}{4\beta_{n+1}\beta_{n-1}\,}, & \text{if $a \in \{p,pr\}$}\,;
\end{cases}
\]
and if $n$ is even and $s$ is the quadratic character mod $p$ at $a$,
\[
\PP\{d(Q) = a,c(Q) = b\} = \begin{cases}
\frac{(1+ s\brackets{\frac{\varepsilon}{p}}^{n/2}) ( 1 - \frac{s}{p^2} \brackets{\frac{\varepsilon}{p}}^{n/2})}{4(1+1/p)(1-1/p^{n+1})} + b\, \frac{\pi_n}{2\beta_{n}\alpha_{n}^s}\,, & \text{if $a \in \{1,r\}$}\,;\\
\\
\frac{1-1/p^n}{4p(1+1/p)(1-1/p^{n+1})}\,, & \text{if $a \in \{p,pr\}$}\,.
\end{cases}
\]
\end{thm}

In the last section of this paper we provide a few examples of applications for the densities obtained. First, we use the distribution of elementary divisors from Corollary \ref{eldivcor} to derive the distribution of ranks and determinants over $\ZZ/p^k\ZZ$. Second, we use the local-global principle to compute the probability that for a random $n \times n$ integer matrix, the greatest common divisor of all its minors of size $m<n$ is square-free. Equivalently, this is the probability that a random integer matrix is similar to a matrix with a square-free minor of size $m$.

\begin{cor}\label{cor:sqfminor}
Let $r,n \in \NN$ with $1\le r\le n-1$. Then as $H \to \infty$ the proportion of integer $n\times n$ symmetric matrices with entries between $1$ and $H$ such that the greatest common divisor of its minors of size $n-r$ is square-free tends to
\[
\rho_{sqf}(r,n) = \prod_p \brackets{\sum_{t=0}^{r+1} \frac{\pi_n}{\beta_{n-t}\pi_t} p^{-\frac{t(t+1)}{2}} - \frac{\pi_n}{\beta_{n-r-1}\pi_{r+1}} p^{-(r+1)(r+2)}} > 0\,.
\]
In particular, if $r = 1$ and $n \to \infty$, then
\[
\rho_{sqf}(1,n) \to \frac{\prod_p ( 1 + p^{-3}(1+1/p)^{-1} + p^{-4})}{\zeta(3)\zeta(5) \cdots \zeta(2k+1)\cdots} \approx 0.9581\,,
\]
and if $r = r(n) \to \infty$ as $n \to \infty$, then $\rho_{sqf}(r,n) \to 1$.
\end{cor}
We note that Corollary \ref{cor:sqfminor} relies on the infinite version of the Chinese remainder theorem known as Ekedahl's sieve \cite{ekedahl1991infinite}, and thus does not include the case $r=0$. The latter corresponds to counting matrices with square-free determinant, and would require one to first prove that such a local product applies. In general, one can only count square-free values of multivariate polynomials when the number of variables is exponential in the degree of the polynomial with the best bound on the number of variables given by Destagnol and Sofos \cite{destagnol2019rational}. There are also exceptionally ``nice'' polynomials, where such a restriction does not apply, see \cite{bhargava2014geometric, bhargava2016squarefree,bhargava2022sqf}. Since the determinant is also symmetric in the entries of the matrix, and either linear or quadratic in each of them, one might hope that this problem is also more approachable, but it currently remains open.


Finally, we show several ways of deriving the probability of isotropy i.e. existence of a non-trivial zero for random quadratic forms in $n\le 4$ variables. A quadratic form over $\ZZ_p$ in $n\ge 5$ is always isotropic and for the cases of $n=2,3,4$ the probability of isotropy was obtained by Bhargava, Cremona, Fisher, Jones, and Keating \cite{bhargava2016probability} through a recursive computation over finite fields. Though their method lies in a different framework of recursive computation, and is capable of solving a much bigger class of problems (see Bhargava \cite{bhargava2014positive}, Bhargava, Cremona, and Fisher \cite{bhargava2016proportion, bhargava2020proportion}), ours might be more straightforward specifically for quadratic forms.

\begin{cor}
Let $p$ be an odd prime, and let $Q$ be a random quadratic form in $n$ variables with coefficients distributed independently according to the Haar measure on $\ZZ_p$. Then
\[
\PP\{\text{$Q$ is isotropic}\} = \begin{cases}
\frac{1}{2}\,, & \text{if $n=2$}\,;\\
1 - \frac{1}{2p(1+ 1/p)^2}\,, & \text{if $n=3$}\,;\\
1 - \frac{1-1/p}{4p^3(1+1/p)^2(1-1/p^5)}\,, & \text{if $n=4$}\,.\\
\end{cases}
\]
\end{cor}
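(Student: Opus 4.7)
The plan is to combine the classical $\QQ_p$-classification of nondegenerate quadratic forms (by dimension, discriminant modulo squares, and Hasse invariant) with the joint distribution of $(d(Q), c(Q))$ supplied by the preceding proposition. A singular form is automatically isotropic via any kernel vector, but the singular locus has Haar measure zero for odd $p$, so it suffices to treat nondegenerate $Q$ and read isotropy off the invariants $(d, c)$.

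By the standard criterion, a nondegenerate form of rank $n$ over $\QQ_p$ is isotropic iff: for $n = 2$, $-d(Q)$ is a square in $\QQ_p^*$; for $n = 3$, $c(Q) = (-1, -d(Q))_p$; for $n = 4$, the pair $(d(Q), c(Q))$ is not the invariant pair of the unique anisotropic quaternary form, which for odd $p$ simplifies to excluding $(d, c) = (1, -1)$, since $(-1, -1)_p = 1$. My first step would be to translate each of these conditions into an explicit isotropic subset of $\{1, r, p, pr\} \times \{\pm 1\}$ by tabulating the Hilbert symbol $(-1, -d)_p$ for each square class of $d$.

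The $n = 4$ case is then immediate: $\PP\{Q\text{ isotropic}\} = 1 - \PP\{d = 1, c = -1\}$, and reading off the $a = 1$, $s = +$, $b = -1$ entry of the proposition together with $\alpha_4^+ = 2(1-1/p^2)^2$ produces the claimed expression after a short rational-function simplification. For $n = 2$ the isotropic subset consists of both Hasse signs attached to the single square class of $-1$, so the $b$-linear correction in the proposition cancels on summing $b = \pm 1$ and only the $b$-free term survives, collapsing to $1/2$. For $n = 3$ the condition selects exactly one sign $b = (-1, -d)_p$ for each of the four square classes of $d$; adding the four corresponding entries of the proposition and simplifying with the explicit forms of $\pi_3$, $\beta_4$, $\beta_2$ yields the stated closed form.

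The main bookkeeping obstacle is the uniform evaluation of $(-1, -d)_p$ across the four square classes of $d$ and the attendant $\varepsilon$-dependence. As a useful internal check, the stated answers are all independent of $\varepsilon = \brackets{\frac{-1}{p}}$, so any residual $\varepsilon$-terms in the intermediate sums must cancel; this both constrains the algebra and flags any sign error in the Hilbert symbol computations.
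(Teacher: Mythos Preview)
Your proposal is correct and follows essentially the same route as the paper: use the classical isotropy criteria for $n=2,3,4$ (the paper writes the $n=3$ condition as $c(Q)=\langle-1,d(Q)\rangle$, which for odd $p$ agrees with your $c(Q)=(-1,-d)_p$ since $(-1,-1)_p=1$), and then read off the relevant entries of the proposition giving $\rho_n(a,b)$. The paper also sketches two alternative derivations (a direct summation over admissible $(\Sigma,S)$ using Theorem~\ref{symdensity}, and a recursive computation recovering the Bhargava--Cremona--Fisher--Jones--Keating argument), but its primary computation matches yours.
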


We would like to add a bit more historical context on the development of random matrix theory over different fields. Random matrix theory over fields other than complexes has mostly been evolving independently of the theory over the complex numbers. For an overview of random matrix theory over finite fields and in particular cycle index techniques see Fulman \cite{fulman2002random}, and for an overview of rank problems and Stein's method see Fulman and Goldstein \cite{fulman2015stein}. One of the recent results exploring the similarities between theory over different fields is the work of Gorodetsky and Rodgers \cite{gorodetsky2019traces}, on the finite field analogue of the fundamental result of Diaconis and Shahshahani \cite{diaconis1994eigenvalues} on the convergence of traces of unitary matrices to the normal distribution. Ideas used in \cite{diaconis1994eigenvalues} rely heavily on representation theory and symmetric function theory. In our case, we may also observe that random $p$-adic matrices are intimately connected with Hall-Littlewood polynomials, which are a generalisation of Schur polynomials used in the work of Diaconis and Shahshahani. Optimistically, we may even expect that with appropriate normalisation results over $p$-adics should morph into the Euclidean case as $p \to \infty$, see for example \cite{van2021limits}.

Random $p$-adic matrices have been studied to a smaller extent. Apart from the ones already mentioned, some of the recent developments in this area are the works of Ellenberg, Jain, and Venkatesh \cite{ellenberg2011modeling} on modelling $\lambda$-invariants, and Poonen and Rains \cite{poonen2012random}, Bhargava, Kane, Lenstra, Poonen, and Rains \cite{bhargava2013modeling}, Poonen \cite{poonen2018heuristics}, Park, Poonen, and Matchett Wood \cite{park2016heuristic} on the heuristics for ranks of elliptic curves based on random symmetric and alternating matrices. In light of these efforts it makes sense to try and better understand the behaviour of different random matrix models on the structural level. More precisely, one may ask what structural properties random matrices over $\FF_p$ or $\ZZ_p$ share with the objects they are modelling. 

We expect that as long as a matrix group has a suitable decomposition, ideas presented in this paper would apply. An obvious example here is the additive group of skew-symmetric (alternating) matrices
\[
A_n(R) = \{X \in \M_n(R): X = -X^\top\}\,,
\]
or similarly, the additive group of symmetric matrices with zero diagonal. In these cases, our approach to computing the distribution of $\ZZ_p$ classes should adapt straightforwardly.

\section{Preliminaries}

Let us first discuss general $p$-adic matrices without the symmetry constraint. Let $\M_n(\QQ_p)$ be the set of all $n \times n$ matrices with entries in $p$-adic numbers $\QQ_p$, and $\M_n(\ZZ_p)$ be its subset of matrices with integral entries. Let $\nu$ be the Haar measure on $\QQ_p$ normalised so that $\nu(\ZZ_p) = 1$, and let $\mu$ be the measure on $\M_n(\QQ_p)$ such that it is the direct product of $n^2$ measures $\nu$ on its entries. We also trivially have that $\mu \brackets{\M_n(\ZZ_p)} = 1$.

The general linear group $\GL_n(\QQ_p)$ is the set of invertible $n \times n$ matrices with entries in $\QQ_p$. The general linear group $K := \GL_n(\ZZ_p)$ is its subgroup of invertible $n \times n$ matrices over $\ZZ_p$, or
\[
\GL_n(\ZZ_p) := \{A \in \text{M}_n(\ZZ_p):\ \det(A) \in \ZZ_p^{\times}\} = \{A \in \text{M}_n(\ZZ_p):\ |\det(A)|_p = 1\}\,.
\]
As a set $\GL_n(\ZZ_p)$ is both open and compact of measure
\[
\pi_n :=\mu\big(\GL_n(\ZZ_p)\big) =
\prod_{k=1}^n\big(1-p^{-k}\big)\,.\]
As a group $\GL_n(\ZZ_p)$ is the largest compact subgroup of $\GL_n(\QQ_p)$, and in this sense it corresponds to the unitary group over Euclidean spaces. Unitary matrices represent isometries, and preserve the Frobenius norm of a complex matrix, while the matrices from $\GL_n(\ZZ_p)$ instead preserve the max-norm $\|A\|_p = \max_{i,j} |a_{ij}|_p$.

\begin{lem}
Let $A \in \M_n(\QQ_p)$ and $\gamma \in \GL_n(\ZZ_p)$. Then $\|\gamma A\|_p = \|A\gamma\|_p = \|A\|_p$.
\end{lem}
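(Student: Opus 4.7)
The plan is to prove each equality as a pair of opposite inequalities, and the argument will rest entirely on two ingredients: the strong triangle inequality on $\QQ_p$, and the fact that both $\gamma$ and $\gamma^{-1}$ lie in $\M_n(\ZZ_p)$ so their entries are bounded by $1$ in $|\cdot|_p$.

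First I would establish the one-sided bound $\|\gamma A\|_p \le \|A\|_p$. The $(i,j)$-entry of $\gamma A$ is $\sum_k \gamma_{ik} a_{kj}$, and the ultrametric inequality gives
\[
|(\gamma A)_{ij}|_p \le \max_k |\gamma_{ik}|_p\, |a_{kj}|_p.
\]
Since $\gamma \in \GL_n(\ZZ_p) \subset \M_n(\ZZ_p)$, every $|\gamma_{ik}|_p \le 1$, so the right-hand side is at most $\max_{k,j} |a_{kj}|_p = \|A\|_p$. Taking the maximum over $i,j$ yields $\|\gamma A\|_p \le \|A\|_p$. To get the reverse inequality, I would use that $\gamma^{-1} \in \GL_n(\ZZ_p)$ as well, which follows from the characterisation $\GL_n(\ZZ_p)=\{X\in \M_n(\ZZ_p): |\det X|_p=1\}$ stated just above together with Cramer's rule. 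Applying the previous bound to $\gamma^{-1}$ and $\gamma A$ gives
\[
\|A\|_p = \|\gamma^{-1}(\gamma A)\|_p \le \|\gamma A\|_p,
\]
and combining the two inequalities closes the loop.

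The statement about right multiplication is identical by symmetry: the $(i,j)$-entry of $A\gamma$ is $\sum_k a_{ik}\gamma_{kj}$, so exactly the same ultrametric estimate gives $\|A\gamma\|_p \le \|A\|_p$, and bootstrapping through $\gamma^{-1}$ gives the equality. There is no genuine obstacle here; the only point that deserves a line of explanation is confirming that $\gamma^{-1}$ also has entries in $\ZZ_p$, which is where the use of $\GL_n(\ZZ_p)$ rather than just $\M_n(\ZZ_p)$ becomes essential (otherwise only submultiplicativity would hold).
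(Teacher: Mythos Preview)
Your proof is correct and follows essentially the same route as the paper's: bound each entry of $\gamma A$ via the ultrametric inequality using $|\gamma_{ik}|_p\le 1$, then apply the same bound with $\gamma^{-1}\in\GL_n(\ZZ_p)$ to get the reverse inequality. The only cosmetic difference is that the paper phrases the first step as $\|\gamma\|_p=1$ (and likewise for $\gamma^{-1}$), while you appeal directly to $\gamma\in\M_n(\ZZ_p)$ and invoke Cramer's rule for the inverse; the substance is the same.
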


\begin{proof}
For any $\gamma \in \GL_n(\ZZ_p)$ we have $\|\gamma\|_p = \|\gamma^{-1}\|_p = 1$. Let $B = \gamma A$, then for any $b_{ij} = \sum_{k=1}^n \gamma_{ik}a_{kj}$ we have
\[
|b_{ij}|_p \le \max_k |\gamma_{ik}a_{kj}|_p \le \max |\gamma_{ik}|_p \max |a_{kj}|_p \le \|\gamma\|_p \|A\|_p = ||A||_p\,,
\]
hence $\|B\|_p \le \|A\|_p$. Conversely, for $A = \gamma^{-1} B$, we also have $\|A\|_p \le \|\gamma^{-1}\|_p \|B\|_p =  \|B\|_p$, so $\|B\|_p = \|A\|_p$.
\end{proof}

Any complex matrix has a canonical form obtained via the singular value decomposition. This corresponds to factoring out by the unitary group $\text{U}_n(\CC)$, which is a maximal compact subgroup of $\GL_n(\CC)$. For any matrix $A \in \M_n(\CC)$ there exist matrices $U,V \in \text{U}_n(\CC)$ such that $A = U S V^*$, where $S$ is the diagonal matrix with entries $\sigma_1 \ge \sigma_2 \ge \ldots \ge \sigma_n \ge 0$ defined uniquely. These are called the singular values of $A$. Similarly, for any $A \in \M_n(\QQ_p)$ there exists a decomposition $A = U \Sigma V$,
where $U, V \in \GL_n(\ZZ_p)$, and $\Sigma = \text{diag}(p^{k_1}, \ldots, p^{k_n})$ with $k_i$ integers such that $-\infty < k_1 \le  \ldots \le k_n \le +\infty$. This sequence is unique and is called the sequence of elementary divisors. Note that $k_i = +\infty$  is possible with $p^{+\infty} = 0$ since $|p^{+\infty}|_p = p^{-\infty} = 0$. In this case the matrix is singular. Such a decomposition exists more generally over principal ideal rings, and is sometimes called the Smith normal form. For general theory see Brown's book \cite{brown1993matrices}.

If $X \in \M_n(\ZZ_p)$, then all $k_i$ are non-negative. Moreover, we can also assume that $k_i < +\infty$, since the set of singular matrices $V = \{\mathbf{X} \in \text{M}_n(\QQ_p): \det \mathbf{X} = 0\}$ has measure zero. 
So further we will restrict to the set of non-singular matrices $\tilde{\M}_n(\ZZ_p) := \M_n(\ZZ_p) \cap \GL_n(\QQ_p)$. Now let us give a simple proof for the distribution of elementary divisors for general matrices over $\ZZ_p$.


For $\gamma \in \tilde{\M}_n(\ZZ_p)$ define $C_{\gamma} := K \cap \gamma K \gamma^{-1}$. In particular, for $\Sigma = \diag(p^{k_1},\ldots,p^{k_n})$ with multiplicities $m_k = \#\{i:k_i=k\}$ we have
\[
C_{\Sigma} = \begin{bmatrix}
\GL_{m_0}(\ZZ_p) & \M_{m_0\times m_1}(\ZZ_p)  & \M_{m_0\times m_2}(\ZZ_p)  &&  &\cdots \\
\M_{m_1\times m_0}(p\ZZ_p) & \GL_{m_1}(\ZZ_p) & \M_{m_1\times m_2}(\ZZ_p)  & &  &\cdots \\
\M_{m_2\times m_0}(p^2 \ZZ_p)& \M_{m_2\times m_1}(p\ZZ_p) & \GL_{m_2}(\ZZ_p) & &  &\cdots  \\
\vdots & \vdots &  & \ddots & & \\
\M_{m_k\times m_0}(p^k\ZZ_p) & \M_{m_k\times m_1}(p^{k-1}\ZZ_p) & \cdots &\cdots  &&  \GL_{m_k}(\ZZ_p) & \\
\vdots & \vdots & & & &\vdots & \ddots\\
\end{bmatrix}\,.
\]
This set is also open and compact, and has measure
\begin{equation}\label{stabsize}
   \mu\big(C_{\Sigma}\big) = p^{-D(\Sigma)} \prod_{k=0}^{\infty} \pi_{m_k}\,, 
\end{equation}
where $D(\Sigma) = \sum_{0 \le i < j} m_i m_j(j-i) = \sum_{1\le i < j \le n} (k_j-k_i)$.


\begin{prop}\label{gendensity}
Let $X$ be a random $n \times n$ matrix with independent entries distributed according to the Haar measure on $\ZZ_p$. Let $\Sigma = \diag\{p^{k_1},\ldots, p^{k_n}\}$, and $m_k = \#\{k_i:k_i=k\}$. Then
\[
\PP\{X \in K\Sigma K\} =  \frac{\pi_n^2}{\pi_{m_0}\cdots \pi_{m_k}\cdots} |\det \Sigma|_p^n p^{D(\Sigma)} \, .
\]
\end{prop}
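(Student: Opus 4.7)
The plan is to combine Lemma \ref{gamlem} and Lemma \ref{stablem} through an orbit--stabiliser decomposition of the double coset $K\Sigma K$. I would let $K$ act by left multiplication on the set of right cosets $gK$ and note that the stabiliser of $\Sigma K$ is precisely $K \cap \Sigma K \Sigma^{-1} = C_\Sigma$.

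First I would verify that two translates $u_1\Sigma K$ and $u_2\Sigma K$ coincide if and only if $u_1^{-1}u_2 \in C_\Sigma$, so that
\[
K\Sigma K \;=\; \bigsqcup_{u \in K/C_\Sigma} u\,\Sigma K.
\]
Each piece has the same measure: applying Lemma \ref{gamlem} to $\gamma = u\Sigma$ and $\mathcal{A} = K$ gives $\mu(u\Sigma K) = |\det(u\Sigma)|^n \mu(K) = |\det \Sigma|^n \pi_n$, since $|\det u|_p = 1$ for $u \in K$. Next, because Lemma \ref{stablem} yields $\mu(C_\Sigma) > 0$, the subgroup $C_\Sigma$ is open in the compact group $K$ (a standard Steinhaus-type argument, or direct inspection of the displayed block form) and hence of finite index, with each coset of Haar measure $\mu(C_\Sigma)$. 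Therefore $[K:C_\Sigma] = \pi_n/\mu(C_\Sigma)$, and summing over the cosets gives
\[
\mu(K\Sigma K) \;=\; [K:C_\Sigma]\cdot |\det\Sigma|^n \pi_n \;=\; \frac{\pi_n^2}{\mu(C_\Sigma)}\,|\det \Sigma|^n.
\]
Substituting $\mu(C_\Sigma) = p^{-D(\Sigma)} \prod_k \pi_{m_k}$ from Lemma \ref{stablem} yields the stated formula.

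The only non-bookkeeping step is the identification of $[K:C_\Sigma]$ with a ratio of Haar measures, which rests on $C_\Sigma$ being open in $K$. Once this is in place, the proposition drops out of the two lemmas at once, with no further symmetric-function or Markov-chain machinery needed; this is also the blueprint one wants to imitate in the symmetric case, with $C_\Sigma$ replaced by the stabiliser of $\Sigma S$ under $U \mapsto U \Sigma S U^\top$.
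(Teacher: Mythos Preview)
Your proposal is correct and follows essentially the same orbit--stabiliser argument as the paper: decompose $K\Sigma K$ as a disjoint union of cosets $u\Sigma K$ indexed by $K/C_\Sigma$, compute $[K:C_\Sigma]=\mu(K)/\mu(C_\Sigma)$ via Haar measure, and finish with Lemmas~\ref{gamlem} and~\ref{stablem}. The only cosmetic difference is that the paper justifies finiteness of the index by an open--compact cover argument rather than by invoking openness of $C_\Sigma$ from its positive measure, but the content is the same.
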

\begin{proof} 
The group $K = \GL_n(\ZZ_p)$ is a subgroup of the group $\tilde{\M}_n(\ZZ_p)$, so $\tilde{\M}_n(\ZZ_p)$ is a disjoint union of double $K$-cosets. For each such coset we have
\[
K \Sigma K = \bigcup_{\gamma \in K} \gamma \Sigma K = \bigsqcup_{\gamma \Sigma K \in K\Sigma K/K} \gamma \Sigma K\,.
\]
The number of left cosets of $K$ equals $[K : C_{\Sigma}]$ with $C_\Sigma = K \cap \Sigma K \Sigma^{-1}$, not necessarily finite. Let $F_{\Sigma}$ be a fundamental domain for right multiplication by $C_{\Sigma}$, then
\[
K = \bigsqcup_{\gamma \in F_{\Sigma}} \gamma C_{\Sigma}\,.
\]
Sets $\gamma C_{\Sigma}$ are open and compact, so $\bigsqcup_{\gamma \in F_{\Sigma}} \gamma C_{\Sigma}$ is a disjoint open-compact cover of the open-compact set $K$. Then this cover has a finite subcover, and it coincides with the cover itself. Using that $F_{\Sigma}$ is finite and $\gamma \in \GL_n(\ZZ_p)$ are isometries we get
\[
\mu(K) = \sum_{\gamma \in F_{\Sigma}} \mu(\gamma C_{\Sigma}) = \sum_{\gamma \in F_{\Sigma}} \mu( C_{\Sigma}) = |F_{\Sigma}|\ \mu( C_{\Sigma})\,.\]
Thus $|F_{\Sigma}| = [K : C_{\Sigma}] = [K : K \cap \Sigma K \Sigma^{-1}] = \mu(K)/\mu(C_{\Sigma})$ and 
\[
\mu(K \Sigma K) = \sum_{\gamma \Sigma K \in K\Sigma K/K} \mu(\gamma \Sigma K) = \sum_{\gamma \Sigma K \in K\Sigma K/K} \mu(\Sigma K) = |F_{\Sigma}|\ \mu(\Sigma K) = \frac{\mu(K)\mu(\Sigma K)}{\mu(C_{\Sigma})}\,.
\]
Finally, for any measurable subset $\mathcal{A} \subseteq \M_n(\QQ_p)$ with finite measure and $\gamma \in \M_n(\QQ_p)$ we have $\mu(\gamma \mathcal{A}) = \mu(\mathcal{A}) |\det(\gamma)|^n.$ This fact together with \eqref{stabsize} implies the statement of the proposition.
\end{proof}

The proof easily extends to rectangular matrices. Let $n > m$ and let $K_{n,m} \subseteq \M_{n\times m}(\ZZ_p)$ denote the set of $n\times m$ matrices consisting of $m$ linearly independent mod $p$ columns. Then for $X \in \M_{n\times m}(\ZZ_p)$ there exist $U \in K_{n,m}$ and $V \in K_m = \GL_m(\ZZ_p)$ such that $X = U \Sigma V$ with $\Sigma = \diag (p^{\mathbf{k}})$. In this case 
\[
\mu(K_{n,m} \Sigma K_m) = \frac{\mu(K_{n,m}) \mu(\Sigma K_m)}{\mu(C_{\Sigma})} =  \frac{\pi_n \pi_m}{\pi_{n-m} \pi_{m_0} \pi_{m_1} \cdots\pi_{m_k}\cdots}p^{D(\Sigma)} |\det \Sigma|_p^m\,.
\]
Moreover, the argument works for any $p$-adic number field with $\ZZ_p$ replaced by the corresponding ring of integers and $\FF_p$ replaced by the corresponding residue field. This has a nice immediate consequence.

\begin{cor} \label{genrank}Let $n, m \in \NN$ with $ n\le m$. Let $X$ be a random $n \times m$ matrix with independent entries distributed uniformly on $\FF_q$, then
\[
\PP\{\rk\, X = n - r\} = q^{-r(m-n+r)} \frac{\pi_n(q)\pi_m(q)}{\pi_r(q) \pi_{m-n+r}(q) \pi_{n-r}(q)}\,,
\]
where $\pi_j(q) = \prod_{i=1}^j (1 - q^{-i})$. In particular, when $n = m$,
\[
\PP\{\rk\, X = n - r\} = q^{-r^2} \frac{\pi_n(q)^2}{\pi_r(q)^2 \pi_{n-r}(q)}\,.
\]
\end{cor}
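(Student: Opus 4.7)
The plan is to run the same orbit--stabiliser argument used for Proposition \ref{gendensity}, but directly over the finite field $\FF_q$. The key simplification is that over a field the Smith normal form collapses to a single invariant, namely the rank: the action $(U,V)\cdot X = UXV$ of $\GL_n(\FF_q)\times \GL_m(\FF_q)$ on $\M_{n\times m}(\FF_q)$ has exactly one orbit per value of the rank, with canonical representative
\[
\Sigma_k := \begin{pmatrix} I_k & 0 \\ 0 & 0 \end{pmatrix}, \qquad k = n-r.
\]

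First I would identify the stabiliser $\mathrm{Stab}(\Sigma_k) = \{(U,V)\in \GL_n(\FF_q)\times \GL_m(\FF_q): U\Sigma_k V = \Sigma_k\}$ by a direct block computation. Splitting $U$ into $(k,n-k)$-blocks and $V$ into $(k,m-k)$-blocks, the equation $U\Sigma_k V = \Sigma_k$ forces $U$ to be block upper-triangular, $V$ to be block lower-triangular, and their top-left blocks $U_1, V_1 \in \GL_k(\FF_q)$ to satisfy $V_1 = U_1^{-1}$; the off-diagonal blocks $U_2 \in \FF_q^{k\times (n-k)}$, $V_3 \in \FF_q^{(m-k)\times k}$ and the remaining diagonal blocks $U_4 \in \GL_{n-k}(\FF_q)$, $V_4 \in \GL_{m-k}(\FF_q)$ are unconstrained. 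This yields
\[
|\mathrm{Stab}(\Sigma_k)| = |\GL_k(\FF_q)|\,|\GL_{n-k}(\FF_q)|\,|\GL_{m-k}(\FF_q)|\, q^{k(n-k)+k(m-k)}.
\]

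Next, orbit--stabiliser gives $|\{X:\rk X = k\}| = |\GL_n(\FF_q)|\,|\GL_m(\FF_q)|/|\mathrm{Stab}(\Sigma_k)|$. Dividing by the total count $q^{nm}$, substituting the standard expression $|\GL_j(\FF_q)| = q^{j^2}\pi_j(q)$, and setting $k = n-r$, the $q$-exponent simplifies to $-(n-k)(m-k) = -r(m-n+r)$, while the $\pi$-factors arrange into the claimed ratio. The main obstacle is just the bookkeeping of $q$-exponents; no new ideas are needed beyond those already deployed for the $p$-adic version, and the square case $n=m$ drops out immediately by specialisation.
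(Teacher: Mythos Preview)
Your proposal is correct and is precisely the argument the paper has in mind: the corollary is stated without proof as an ``immediate consequence'' of running the same orbit--stabiliser computation as in Proposition~\ref{gendensity} over the field $\FF_q$, where the Smith normal form degenerates to the single rank invariant. Your block analysis of $\mathrm{Stab}(\Sigma_k)$ and the exponent bookkeeping $n^2+m^2-k^2-(n-k)^2-(m-k)^2-k(n-k)-k(m-k)-nm = -(n-k)(m-k)$ are both right.
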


\begin{remark}\normalfont
The proof of Proposition \ref{gendensity} relies on the invariance of $\mu$ under multiplication by $\gamma \in\GL_n(\ZZ_p)$. The Haar measure on $p^N \ZZ_p$ is the only entry-wise measure with this property, just like the Gaussian measure over $\RR$ \cite{evans2001local} in the case of the Gaussian Orthogonal Ensemble. However it is possible to define a measure on the whole group $\M_n(\ZZ_p)$ with this property e.g. the (multiplicative) Haar measure $\mu_H$ with $d_H X = \prod \mu(d x_{ij})/|\det X|^n$, or simply define a measure that only depends on $\Sigma$, see Fulman and Kaplan \cite{fulman2019random}.
\end{remark}

Macdonald \cite{macdonald1998symmetric} showed that for the (multiplicative) Haar measure $\mu_H$ on $\M_n(\ZZ_p)$ normalised such that $\mu_H(K) = 1$
\[
\mu_H(K\Sigma K) = p^{-\sum_i (n-i) k_i} P_{\mathbf{k}}(p^{n-1}, p^{n-2}, \ldots,1;p^{-1})\,,
\]
where $P_{\lambda}(x_1,\ldots,x_n;t)$ is the Hall-Littlewood polynomial defined by
\[
P_{\lambda}(x_1,\ldots,x_n;t) = \frac{(1-t)^n}{\prod_{i=0}^{\infty} \prod_{j=1}^{m_i(\lambda)} (1-t^j)} \sum_{\sigma \in S_n} \sigma\brackets{\mathbf{x}^{\lambda} \prod_ {i <j} \frac{x_i - tx_j}{x_i-x_j}}\,.
\]
We have $\mu(K\Sigma K) = \pi_n |\det \Sigma|^n \mu_H(K\Sigma K)$, and it is also easy to see from the definition above that $A^{-|\lambda|} P_{\lambda}(x_1,\ldots,x_n;t) = P_{\lambda}(x_1/A,\ldots,x_n/A;t)$. Then 
\[
\mu(K\Sigma K) = 
\pi_n \,p^{-\sum_i (n-i) k_i}  P_{\mathbf{k}}(p^{-1}, p^{-2}, \ldots,p^{-n};p^{-1})\,.
\]

We find it helpful to think of the joint distribution of elementary divisors both in terms of the measure of the quotient, and in terms of symmetric functions. Both of these approaches are mirrored in random matrix theory. The first one corresponds to computing the Jacobian of the change of variables from the entries of the matrix to the parameters of the decomposition.
The second one is essentially the Representation Theory approach, which is more recent, but has lead to many deep insights. For reference, see work of Diaconis and Shahshahani \cite{diaconis1994eigenvalues}, and the exposition by Gamburd \cite{gamburd2007some}.

From now on, we will restrict to odd primes $p$.

A quadratic form over $\ZZ_p$ in $n$ variables is a homogeneous multivariate polynomial of degree $2$
\[
Q(x_1,\ldots,x_n) = \sum_{1 \le i \le j \le n} q_{ij}x_i x_j, \quad q_{ij} \in \ZZ_p\,.
\]
Let $S_n(\ZZ_p)$ be the set of symmetric $n \times n$ matrices over $\ZZ_p$. This is an $n(n+1)/2$-dimensional module over $\ZZ_p$ isomorphic to the module of quadratic forms over $\ZZ_p$ in $n$ variables. We will use symmetric matrices and quadratic forms interchangeably. Due to homogeneity it does not matter if we consider quadratic forms over $\ZZ_p$ or $\QQ_p$, but the former is more convenient.

We will use the following notation throughout the paper. Let $r \in \ZZ_p^{\times}$ be a fixed non-square in $\ZZ_p$; it is also a non-square mod $p$. Define
\[
\chi(x) := \begin{cases}+1\,, & \text{if $x \in \ZZ_p^{\times} \cap (\ZZ_p)^2$}\,;\\
-1\,,& \text{if $x \in \ZZ_p^{\times} \setminus (\ZZ_p)^2$}\,;\\
0\,,& \text{if $x \in p\ZZ_p$}\,.
\end{cases}
\]
In particular, $\chi(1) = +1$ and $\chi(r) = -1$. Sometimes we will write $\chi(1) = +$ and $\chi(r) = -$ to match the notation of $\alpha_n^{\pm}$. Denote $\chi(-1)$ by $\varepsilon$. 

Let $\left<a, b\right>$ denote the Hilbert symbol
\[
\left<a, b\right> = \begin{cases}1\,, & \text{$ax^2 + by^2 = z^2$ has a non-trivial solution over $\QQ_p$}\,;\\
-1\,,& \text{otherwise}\,.
\end{cases}
\]

A non-singular $p$-adic quadratic form $Q$ has two invariants: the discriminant $d(Q) \in \QQ_p^{\times}/(\QQ_p^{\times})^2 \cong \{1,r,p,pr\}$, and the Hasse invariant $c(Q) \in \{\pm 1\}$. If $Q(\mathbf{x}) = a_1 x_1^2 + \ldots +a_n x_n^2$, then
\[
d(Q) = \prod_{i=1}^n a_i\,,\quad c(Q) = \prod_{1\le i<j \le n} \left<a_i, a_j\right>\,.
\]
These two invariants completely define the equivalence class of $Q$ with respect to a non-degenerate change of variables. Translating into the matrix language, for any matrix $Q \in S_n(\QQ_p)$ there exists a matrix $U \in \GL_n(\QQ_p)$ such that $Q = U D U^{\top}$, where $D$ is the representative of the equivalence class defined by $d(Q)$ and $c(Q)$. There are $2|\QQ_p^{\times}/(\QQ_p^{\times})^2| = 8$ classes altogether. 

Note that this is very different from the spectral decomposition of a real symmetric matrix, where the diagonal entries of $D$ are the eigenvalues of the matrix, and $U$ is the unitary matrix of eigenvectors. It is possible to consider a different decomposition which would be analogous to the spectral decomposition (for example, see \cite{fulman2002random}), but this is not the subject of this paper.

If we restrict to $U \in \GL_n(\ZZ_p)$, we have to account for scaling by powers of $p$. For any matrix $Q \in S_n(\ZZ_p)$ there exists $U \in \GL_n(\ZZ_p)$ such that $Q = U \Sigma S U^\top$, where $\Sigma$ is the matrix of elementary divisors, and $S=S(\Sigma)$ is the matrix of signatures. Each diagonal block in $\Sigma$ corresponding to $p^k$ with multiplicity $m_k$ has its own quadratic class $1$ or $r$, which is reflected in the corresponding diagonal block of $S$. 

The canonical form for a block of $S$ with  signature $s_k = +$ meaning quadratic class $1$ is the $m_k \times m_k$ identity matrix $\mathbf{1}_{m_k}^+$. For a block with signature $s_k=-$ meaning quadratic class $r$ it is the matrix $\mathbf{1}_{m_k}^-$, the $m_k \times m_k$ identity matrix with last entry replaced by $r$. If $m_k=0$, set $s_k = +$. This is exactly the same as the canonical form for quadratic forms, for more detail see Cassels' book \cite{cassels2008rational}.  Further we will write $K \Sigma S K\T$ for the $\ZZ_p$-equivalence class of $\Sigma S$ in $S_n(\ZZ_p)$. 

\section{Distribution of equivalence classes of symmetric matrices}
\subsection{$\ZZ_p$-equivalence}

In this section we consider symmetric matrices with independent entries distributed according to the Haar measure on $\ZZ_p$ for odd $p$; denote this measure on $S_n(\ZZ_p)$ by $\mu$. We derive the distribution of equivalence classes with respect to conjugation by $\GL_n(\ZZ_p)$. It is possible to consider $S_n(\ZZ_2)$ and obtain a similar answer with the principal difference coming from the canonical form and the orthogonal group over $\FF_2$.


First, let us show that we can reduce the problem over $\ZZ_p$ to computing $K \Sigma S K^{\top}$ mod $p^k$ for some big $k$. For convenience let $R_k := \ZZ/p^k \ZZ$. If two matrices are equivalent over $\ZZ_p$, their reductions are equivalent over $R_k$ for all $k \in \NN$. The converse is only true for $k$ large enough.
 
\begin{lem}[Chapter 8, Lemma 5.1 \cite{cassels2008rational}] \label{lifteq}
Let $A,B \in S_n(\ZZ_p)$ such that $|\det A|_p = |\det B|_p = p^{-m}$. Suppose that for $k \ge m+1$ we have $A \sim_{\GL_n(R_k)} B \mod p^k$. Then $A \sim_{\GL_n(\ZZ_p)} B$.
\end{lem}

Let $C_R(A): = \{\gamma \in \GL_n(R): \gamma A\gamma\T = A\} \subseteq \GL_n(R)$ denote the stabiliser of $A \in \M_n(R)$, and let $\text{O}_{n}^+(R) := C_R(\mathbf{1}_n^+)$ denote the orthogonal group over a ring $R$. We only consider $R = \ZZ_p$ or $R_k = \ZZ/p^k \ZZ$, so $R$ always contains a non-square, and we can define the twin group of the orthogonal group $\text{O}_n^-(R) = C_R(\mathbf{1}_n^-)$. We will call both of them orthogonal groups. If $\FF_p$ is a finite field with $p$ odd, $\varepsilon = \chi(-1)$ and $s \in \{\pm\}$, then
\[
\#\text{O}_{n}^s(\FF_p) =  |\text{O}_{n}^s(\FF_p)| = \begin{cases}
2 p^{n(n-1)/2} \prod_{i=1}^{t}(1 - p^{-2i})\,, & \text{for $n = 2t+ 1$}\,;\\
\\
\frac{2p^{n(n-1)/2}}{1 + s \varepsilon^tp^{-t}} \prod_{i=1}^{t}(1 - p^{-2i})\,, & \text{for $n = 2t$}\,.
\end{cases}
\]
The size of the orthogonal groups over $\ZZ/p^k\ZZ$ can be obtained via multivariate Hensel's lemma, and in particular, their ``densities'' $\alpha_{n}^s := |\text{O}_{n}^s(\FF_p)|/p^{n(n-1)/2}$ are preserved.

\begin{lem}[Multivariate Hensel's lemma for systems \cite{fisher1997note}]
Let $\mathbf{f} = (f_1, \ldots, f_m) \in \ZZ_p[x_1, \ldots, x_n]^m$ with $n \ge m$, and let $\mathbf{J}_\mathbf{f} = (\partial f_i/\partial x_j)_{1 \le i \le m, 1 \le j \le n}$ be the Jacobian of the system $\mathbf{f} = \mathbf{0}$. Let $\mathbf{t} \in \ZZ_p^n$ satisfy
\[
\|\mathbf{f}(\mathbf{t})\|_p < 1\,,
\]
and suppose that there exists a subset of rows $S  \subseteq [n]$ of size $m$ such that $\mathbf{J}_{\mathbf{f},S} = (\partial f_i/\partial x_j)_{1 \le i \le m, j \in S}$, the restriction of $\mathbf{J}_\mathbf{f}$ to the set of rows in $S$, satisfies
\[
|\det \mathbf{J}_{\mathbf{f},S}(\mathbf{t})|_p = 1\,. 
\]
Then there exists $\mathbf{y} \in \ZZ_p^n$ such that $\mathbf{f}(\mathbf{y}) = 0$ with $\|\mathbf{y} - \mathbf{t}\|_p < 1$. Moreover, with variables outside of $S$ fixed, such $\mathbf{y}$ is unique.
\end{lem}
\begin{lem} \label{denslem} Let $n,k \in \NN$ and $s \in \{\pm\}$. Then  $p^{-kn(n-1)/2}|\text{O}_{n}^s(R_k)| = \alpha_n^s$.
\end{lem}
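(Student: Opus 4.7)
The plan is to apply the stated multivariate Hensel's lemma to the polynomial equations cutting out $\text{O}_n^s$. I write the orthogonal group as the zero locus inside $\M_n$ of the $m := n(n+1)/2$ polynomials $f_{ij}(U) := (U U\T - \mathbf{1}_n^s)_{ij}$, $1 \le i \le j \le n$, in the $n^2$ matrix entries $u_{k\ell}$. A direct calculation gives $\partial f_{ij}/\partial u_{k\ell} = \delta_{ik} u_{j\ell} + \delta_{jk} u_{i\ell}$, so the Jacobian at $U$ coincides, under the natural identification of the target with $\FF_p^m$, with the linear map $L_U : V \mapsto UV\T + VU\T$ on $\M_n$.

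The key computation is that $L_U$ is surjective onto symmetric matrices whenever $U \in \GL_n(\FF_p)$, and in particular whenever $U \in \text{O}_n^s(\FF_p)$. Given any symmetric $A$, the matrix $V := \tfrac12 A U^{-\top}$ satisfies $UV\T = \tfrac12 A$ and $VU\T = \tfrac12 A U^{-\top} U\T = \tfrac12 A$, hence $L_U(V) = A$; here I use that $2$ is a unit modulo $p$, which is precisely where oddness of $p$ enters. Consequently some $m \times m$ minor of $\mathbf{J}_{\mathbf{f}}(U_0)$ is a unit in $\FF_p$ at every $U_0 \in \text{O}_n^s(\FF_p)$, so the Jacobian hypothesis of Hensel's lemma holds for every lift of $U_0$ to $\M_n(\ZZ_p)$.

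Hensel's lemma then yields, for each $U_0 \in \text{O}_n^s(\FF_p)$ and each choice of the $n^2 - m = n(n-1)/2$ free coordinates in $\ZZ_p$ reducing to the corresponding entries of $U_0$, a unique lift to $\text{O}_n^s(\ZZ_p)$. Reducing modulo $p^k$, each free coordinate admits $p^{k-1}$ distinct values in $\ZZ/p^k\ZZ$ above its $\FF_p$-value, and the uniqueness clause of Hensel shows that the fibre of the reduction map $\text{O}_n^s(\ZZ/p^k\ZZ) \to \text{O}_n^s(\FF_p)$ above $U_0$ has cardinality exactly $p^{(k-1)n(n-1)/2}$. Summing over $U_0$ gives $|\text{O}_n^s(\ZZ/p^k\ZZ)| = p^{(k-1)n(n-1)/2}\,|\text{O}_n^s(\FF_p)|$, and dividing by $p^{kn(n-1)/2}$ produces $\alpha_n^s$.

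The main obstacle is the surjectivity of $L_U$, i.e.\ the smoothness of $\text{O}_n^s$ over $\ZZ_p$; everything else is bookkeeping with the multivariate Hensel statement. A minor subtlety is that the distinguished column subset of size $m$ may depend on $U_0$, but this is harmless since we only need the existence of some non-vanishing maximal minor at each $\FF_p$-point of $\text{O}_n^s$.
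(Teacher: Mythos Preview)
Your proof is correct and takes essentially the same route as the paper's: show the Jacobian of the defining system has full rank at each $\FF_p$-point of $\text{O}_n^s$, then invoke the multivariate Hensel lemma to count lifts. Your verification of full rank via the explicit right inverse $V = \tfrac12 A U^{-\top}$ of $V \mapsto UV^\top + VU^\top$ is slicker than the paper's block-by-block row-independence argument; one small slip is that for $s=-$ the defining equation should read $U\mathbf{1}_n^- U^\top = \mathbf{1}_n^-$ rather than $UU^\top = \mathbf{1}_n^-$, but the paper likewise treats only $s=+$ explicitly and your argument adapts immediately.
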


\begin{proof}
Without loss of generality, set $s = +$ and write $\mathbf{1}$ for $\mathbf{1}_n^+$ to simplify notation in this lemma. If $U \in \text{O}_n^+(R_k)$, then $U$ is a solution to the equation $X X^\top = \mathbf{1}$.
Let $X = (x_{ij})$, then $X X\T = \mathbf{1}$ is a system of $m = n(n+1)/2$ equations in $n^2$ variables. Enumerate the variables along the rows so that 
\[
y_1 = x_{11},\, \ldots\,, y_n = x_{1n}, x_{n+1} = x_{21}, \ldots, y_{2n} = x_{2n},\, \ldots\,, y_{n^2} = x_{nn}\,.
\]
Enumerate the equations along the rows starting from the diagonal so that 
\[
f_1 = (X X\T - \mathbf{1})_{11}, \,\ldots\,, f_n = (X X\T - \mathbf{1})_{1n}, 
f_{n(n+1)/2} = (X X\T - \mathbf{1})_{nn}\,.
\]
Let $A$ be a solution to $X X\T = \mathbf{1}$ mod $p$. The condition $\max_S |\det \mathbf{J}_{\mathbf{f},S}(\mathbf{t})|_p = 1$ is equivalent to $\text{rk$_p$}\, \mathbf{J}_{\mathbf{f}}(\mathbf{t}) := (\rk\, \mathbf{J}_{\mathbf{f}}(\mathbf{t}) \mod p) = m$, or that all $m$ rows of the Jacobian matrix at $A$ are linearly independent mod $p$. Let $a_1, \ldots, a_n$ be the rows of $A$. Then $\mathbf{J}_{\mathbf{f}}(A)$ is block-upper-triangular.
Let $A \setminus a_1 \ldots a_j$ be the truncated $A$ without first $j$ rows. As $p \neq 2$, $\text{rk$_p$}\, \mathbf{J}_{\mathbf{f}}(\mathbf{t}) = \text{rk$_p$}\, B$ for
\[
B = \begin{bmatrix}
A & \\
0 & A \setminus a_1 &  \\
0 & 0 & A \setminus a_1, a_2 & \\
\vdots & \vdots & \vdots & \ddots &   &  \\
0 & 0 & 0 & \cdots & 0 & a_n\\
\end{bmatrix}\,.
\]
Let $b_j$ denote the rows of $B$. Suppose $\sum_j \gamma_j b_j = 0$ for some $\gamma_j \in \FF_p$. Looking only at the first $n$ columns this implies $\sum_{j\le n} \gamma_j a_j = 0$, but the rows of $A$ are linearly independent mod $p$, hence $\gamma_j = 0$ for $j \le n$. Repeating the argument for the next $n$ columns and so on conclude that $\gamma_j = 0$ for all $j$. Hence rows of $B$ are linearly independent and $\text{rk$_p$\,} \mathbf{J}_{\mathbf{f}}(\mathbf{t}) = \text{rk$_p$\,} B = m$.

For $A \in \text{O}_n^s(\FF_p)$ there is a subset of columns $S$ of $\mathbf{J}_{\mathbf{f}}(A)$ such that these columns are independent mod $p$. This corresponds to choosing $|S| = m$ variables. Lift all the variables outside of $S$ arbitrarily with $p^{(k-1)n(n-1)/2}$ options. The $m$ variables we chose have a unique lift by Hensel's lemma.
\end{proof}
\begin{thm}\label{symdensity}
Let $p$ be an odd prime, and let $X \in S_n(\ZZ_p)$ be a random symmetric $n \times n$ matrix with independent entries distributed according to the Haar measure on $\ZZ_p$. Let $\Sigma = \diag(p^{k_1},\ldots,p^{k_n})$ be a matrix of elementary divisors with multiplicities $m_k = \#\{i:k_i=k\}$, and $S = \text{diag}(\mathbf{1}_{m_0}^{s_0},\ldots,\mathbf{1}_{m_k}^{s_k},\ldots)$ be a matrix of signatures, then the probability that $X$ is in the $(\Sigma, S)$-class equals
\[
\mu(K\Sigma S K^{\top}) = \frac{\pi_n}{\alpha_{m_0}^{s_0} \cdots \alpha_{m_l}^{s_l} \cdots} \prod_{j=1}^{n} p^{-k_j(n-j+1)} \,. 
\]
\end{thm}
\begin{proof}
Let $\Sigma = \text{diag }(p^{k_1}, \ldots, p^{k_n})$ with multiplicities $m_k = \#\{i: k_i = k\}$. Let $S$ be the matrix of signatures with $s_0, s_1, \ldots$ corresponding to blocks of size $m_0, m_1$ and so on. Let $|\det \Sigma|_p = p^{-M}$ and $k_n = t$. Then by Lemma \ref{lifteq} for $m \ge M + 1$
\[
\mu(K\Sigma S K\T) = \frac{\#\{\gamma \in S_n(R_m):\ \gamma \in K \Sigma SK\T \mod p^m\}}{p^{mn(n+1)/2}}\,.
\]
By the Orbit-Stabiliser Theorem,
\[
\#\{\gamma \in \M_n(R_m):\ \gamma \in K\Sigma SK\T \mod p^m\} = \frac{|\GL_n(R_m)|}{|C_{R_m}(\Sigma S)|}\,.
\]
We have $C_{\ZZ_p} (\Sigma S) \subseteq K \cap \Sigma K \Sigma^{-1}$ and $C_{R_m}(\Sigma S) \subseteq (K \cap \Sigma K \Sigma^{-1}) \mod p^m$. Consider the equation $U \Sigma S U\T = \Sigma S$. Divide $U$ into blocks according to $m_k$ so that
\[
U = \begin{bmatrix}
U_{0,0} & U_{0,1} & U_{0,2} & \cdots & U_{0,t} \\
p U_{1,0} & U_{1,1} & U_{1,2}& \cdots & U_{1,t} \\
p^2 U_{2,0} & p U_{2,1} & U_{2,2} & \cdots & U_{2,t}\\
\vdots & \vdots &  & \ddots & \vdots \\
p^t U_{t,0} & p^{t-1} U_{t,1} & \cdots & p U_{t,t-1} & U_{t,t} \\
\end{bmatrix}
\]
with $U_{i,j} \in  M_{m_i\times m_j}(\ZZ_p)$, $U_{i,i} \in  \GL_{m_i}(\ZZ_p)$ and $S_{ii} = \mathbf{1}_{m_i}^{s_i}$. Rewrite $U \Sigma S U^{\top} = \Sigma S$ block-wise for a system of matrix equations with $(i,j)$-th block satisfying
\[
\sum_{k=i}^j p^j U_{i,k} S_{kk} U_{j,k}\T + \sum_{k=0}^{i-1} p^{i+j-k} U_{i,k} S_{kk} U_{j,k}\T + \sum_{k=j+1}^{\infty} p^{k} U_{i,k} S_{kk} U_{j,k}\T= \mathbf{0}\ \mod p^{m}\,.
\]
Divide out $(i,j)$-th block by $p^j$ and separate the diagonal and off-diagonal conditions:
\[\begin{cases}
U_{ii} S_{ii} U_{ii}\T  + \sum\limits_{k=0}^{i-1} p^{i-k} U_{ik} S_{kk} U_{ik}\T + \sum\limits_{k=i+1}^{\infty} p^{k-i} U_{ik} S_{kk} U_{ik}\T= S_{ii}\ \mod p^{m-i} &\text{ for $i \le t$}\,; \\
\\
\sum\limits_{k=i}^j U_{ik} S_{kk} U_{jk}\T + \sum\limits_{k=0}^{i-1} p^{i-k} U_{ik} S_{kk} U_{jk}\T + \sum\limits_{k=j+1}^{\infty} p^{k-j} U_{ik} S_{kk} U_{jk}\T= \mathbf{0}\  \mod p^{m-j} & \text{ for $i < j \le t$}\,.\end{cases}
\]
Solve this system mod $p$, where it reduces to
\[\begin{cases}
U_{ii} S_{ii} U_{ii}\T  = S_{ii} & \text{ for $i \le t$}\,; \\
\\
\sum\limits_{k=i}^j U_{ik} S_{kk} U_{jk}\T = \mathbf{0} & \text{ for $i < j \le t$}\,,\end{cases}
\]
and lift the solutions to $R_m$. Fix the lower blocks with $i > j$ arbitrarily, solve the diagonal equations with $U_{ii} \in \text{O}_{m_i}^{s_i}(\FF_p)$, then solve the remaining linear system for $j = i+1,\ldots, t$ consecutively (``diagonal by diagonal''). The blocks $U_{ik}$ with $i <k$ are determined uniquely from $U_{ik}$ with $ i \ge k$. It now remains to show that this system is ``well-defined'' i.e. its Jacobian matrix is full-rank. To avoid writing out each element-wise derivative separately, use matrix derivatives. The Jacobian of the system mod $p$ consists of the following matrix derivatives:
\begin{align*}
& B_{ii}^{ii} = \frac{\partial (U_{ii} S_{ii} U_{ii}\T)}{\partial U_{ii}} =  E\brackets{(U_{ii} S_{ii} \otimes \mathbf{1}_{m_i}^+) K + (\mathbf{1}_{m_i}^+ \otimes U_{ii}S_{ii})}  & \text{for $i \le t$}\,;\\
& B_{jj}^{ij}=  \frac{\partial (\sum_{k=i}^j U_{ik} S_{kk} U_{jk}\T)}{\partial U_{jj}}  = \frac{\partial (U_{ij} S_{jj} U_{jj}\T)}{\partial U_{jj}} = U_{ij} S_{jj} \otimes \mathbf{1}_{m_j}^+& \text{for $i < j \le t$}\,;\\
& B_{il}^{ij} = \frac{\partial (\sum_{k=i}^j U_{ik} S_{kk} U_{jk}\T)}{\partial U_{il}} = \frac{\partial  (U_{il} S_{ll} U_{jl}\T)}{\partial U_{il}} = S_{ll} U_{jl}\T \otimes \mathbf{1}_{m_i}^+  &\text{ for $l = i \ldots j$}\,.\end{align*}

Here $\otimes$ is the tensor product, $E$ and $K$ are the elimination and the commutation matrix. The diagonal derivatives $B_{ii}^{ii}$ are as in Lemma \ref{denslem}. The Jacobian matrix mod $p$ is block-lower-triangular.
On the diagonal blocks $B_{ii}^{ii}$ for $i = 0,\ldots, t$ come first, then $B_{ij}^{ij}$ for $i = 0,\ldots,t$ and $j = i+1,\ldots, t$. This corresponds to how we solved the system mod $p$. Blocks $B_{ij}^{ij} = S_{jj} U_{jj}\T \otimes \mathbf{1}_{m_i}^+$ are full-rank as $\det B_{ij}^{ij} = (\det S_{jj} U_{jj})^{m_i} \in \ZZ_p^{\times}$, and so are $B_{ii}^{ii}$ by Lemma \ref{denslem}. Thus the Jacobian has full row rank. 

Blocks $U_{ii}$ have $|\text{O}_{m_i}^{s_i}(\ZZ/p^{m-i} \ZZ)| p^{im_i^2} = \alpha_{m_i}^{s_i} (p^{m-i})^{m_i(m_i-1)/2} p^{im_i^2}$ lifts mod $p^{m-i}$. For $i > j$ blocks $U_{ij} \in \M_{m_i\times m_j}(p^{m} \ZZ/p^{i-j} \ZZ)$ with $p^{(m-(i-j))m_im_j}$ options mod $p^{m-(i-j)}$, and the upper blocks are determined uniquely mod $p^{m-j}$ by Hensel's lemma. Hence $|C_{R_m}(\Sigma S)|$ equals
\begin{equation*}
\prod_{i=0}^{\infty} \alpha_{m_i}^{s_i} p^{(m-i)m_i(m_i-1)/2 + im_i^2} 
\prod_{i < j} p^{(m-(j-i))m_im_j + jm_im_j} =  p^{mn(n-1)/2}\prod_{i=0}^{\infty} \alpha_{m_i}^{s_i}
\prod_{j =1}^n p^{k_j(n-j+1)}.
\end{equation*}
This then implies the statement of the theorem.
\end{proof}

\begin{cor}\label{symel}
Let $\Sigma = \diag(p^{k_1},\ldots,p^{k_n})$ be a matrix of elementary divisors with multiplicities $m_k= \#\{i:k_i=k\}$, then
\[
\mu (K \Sigma K \cap S_n(\ZZ_p)) = \frac{\pi_n}{\beta_{m_0} \cdots \beta_{m_k} \cdots} \prod_{i=1}^{n} p^{-k_j(n-j+1)} ,
\]
where $(\beta_t)^{-1} =  (\alpha_t^+)^{-1} + (\alpha_t^-)^{-1} = \prod_{j=1}^{\lfloor\frac{t}{2}\rfloor} (1-p^{-2j})$.
\end{cor}

Now let us give a brief discussion regarding the connection between the distributions in question and symmetric functions. Recall that the density of elementary divisors (\ref{gensymf}) of general random matrices over $\ZZ_p$ can be written in terms of symmetric functions. Here we similarly have
\begin{align*}
\mu(K \Sigma S K\T) = 
\frac{\pi_{m_0} Q_{\mathbf{k}} (p^{-\mathbf{n}};p^{-1})}{\alpha_{m_0}^{s_0}\cdots\alpha_{m_k}^{s_k}\cdots}\,,
\end{align*}
where $p^{-\mathbf{n}} = (p^{-1},p^{-2},\ldots, p^{-n})$ and $Q_{\mathbf{k}}(\mathbf{x};t)$ is the Hall-Littlewood $Q$ polynomial \cite{macdonald1998symmetric}
\begin{align*}
& Q_{\mathbf{k}}(x_1,\ldots,x_n;t) = \frac{(1-t)^n}{\pi_{m_0}} \sum_{\sigma \in S_n} \sigma\brackets{\mathbf{x}^{\mathbf{k}} \prod_{i <j} \frac{x_i - tx_j}{x_i-x_j}}\,;\\ 
& Q_{\mathbf{k}}(p^{-\mathbf{n}};p^{-1}) = \frac{\pi_n}{\pi_{m_0}} p^{-\sum_i (n-i+1)k_i}\,.
\end{align*}
Let $d_{\mathbf{k}}(t) := \prod_{i \ge 1} \prod_{j=1}^{\lfloor \frac{m_i}{2}\rfloor} (1-t^{2j})$, and rewrite the density in Corollary \ref{symel} as
\[
\mu\brackets{K\Sigma K} = \frac{\pi_{m_0}}{\beta_{m_0}} \frac{Q_{\mathbf{k}} (p^{-\mathbf{n}};p^{-1})}{ d_{\mathbf{k}}(p^{-1})} = \mu(S_{m_0}(\ZZ_p) \cap \GL_{m_0}(\ZZ_p)) \frac{Q_{\mathbf{k}} (p^{-\mathbf{n}};p^{-1})}{ d_{\mathbf{k}}(p^{-1})}\,.
\]
This form is helpful when considering the limit $n \rightarrow \infty$.

 Let $\lambda = (\lambda_1,\ldots, \lambda_l, 0,\ldots)$ be a partition of length $\ell(\lambda) = l$ with $\lambda_1 \ge \lambda_2\ge \ldots \ge \lambda_l > 0$ and $m_j(\lambda) = \#\{i:\lambda_i = j\}$. Partitions are defined up to an arbitrary string of zeros at the end. Let $f_n(\lambda)$ be the probability that the exponents of elementary divisors are equivalent to $\lambda$ as a partition, then
\begin{equation}\label{fnlambda}
f_n(\lambda) = \begin{cases} \frac{\pi_n}{\beta_{n-\ell(\lambda)}d_{\lambda}(p^{-1})} \prod_{i = 1}^{\ell(\lambda)} p^{-i\lambda_i} =  \frac{\pi_{n-\ell(\lambda)}}{\beta_{n-\ell(\lambda)}} \frac{Q_{\mathbf{\lambda}} (p^{-\mathbf{n}};p^{-1})}{ d_{\mathbf{\lambda}}(p^{-1})}\,, & \ell(\lambda) \le n\,;\\
0,& \text{otherwise}\,.
\end{cases}
\end{equation}
\begin{prop} Let $f_n(\lambda)$ be the distribution on partitions induced by the elementary divisors. Then the limiting distribution on partitions as $n \rightarrow\infty$ exists and is equal to 
\[
f(\lambda) := \lim_{n\rightarrow\infty} f_n(\lambda) = \frac{\pi_\infty}{\beta_{\infty}d_{\lambda}(p^{-1})} p^{-\sum i\lambda_i}\,.
\]
\end{prop}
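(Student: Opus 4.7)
The plan is to take a termwise limit directly in the closed-form expression for $f_n(\lambda)$ derived just above the proposition. For a fixed partition $\lambda$ with $\ell(\lambda) = l$, and for every $n \ge l$,
\[
f_n(\lambda) = \frac{\pi_n}{\beta_{n-l}\, d_{\lambda}(p^{-1})}\, p^{-\sum_{i} i \lambda_i}.
\]
The factors $d_{\lambda}(p^{-1})$ and $p^{-\sum_i i \lambda_i}$ depend only on $\lambda$, so the entire content of the statement reduces to the convergence of the two $n$-dependent factors $\pi_n$ and $\beta_{n-l}$ as $n \to \infty$.

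Next I would invoke standard convergence of infinite products. Both $\pi_n = \prod_{k=1}^{n}(1-p^{-k})$ and $\beta_m = \prod_{j=1}^{\lfloor m/2 \rfloor}(1-p^{-2j})$ are partial products of infinite products that converge absolutely, since $\sum_{k \ge 1} p^{-k}$ is a convergent geometric series for $p \ge 2$. Therefore $\pi_n \to \pi_\infty$ and, because $l$ is fixed, $\beta_{n-l} \to \beta_\infty$ as $n \to \infty$. Substituting these limits into the formula for $f_n(\lambda)$ immediately yields the claimed identity.

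There is essentially no analytic obstacle: the proposition is a direct consequence of the explicit formula already established for $f_n(\lambda)$ combined with elementary convergence of two infinite products, and no case-work in $\lambda$ is needed since $l = \ell(\lambda)$ is held fixed while $n$ grows. The only genuinely non-trivial point one might still want to address is whether the pointwise limit $f$ is itself a probability distribution, i.e.\ whether $\sum_{\lambda} f(\lambda) = 1$. This would be the main obstacle in a fully complete treatment: Fatou's lemma gives $\sum_{\lambda} f(\lambda) \le 1$ for free, but the matching lower bound requires either a uniform tail estimate on $f_n$ (so that one can pass a dominated limit through the sum over $\lambda$) or a direct evaluation via the Hall--Littlewood identity applied at the principal specialization $x_i = p^{-i}$.
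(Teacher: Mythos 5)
Your reduction is correct as far as it goes, and the first half matches the paper's proof verbatim in spirit: for fixed $\lambda$ the only $n$-dependence in
\[
f_n(\lambda) = \frac{\pi_n}{\beta_{n-\ell(\lambda)}\, d_{\lambda}(p^{-1})}\, p^{-\sum_i i\lambda_i}
\]
sits in $\pi_n$ and $\beta_{n-\ell(\lambda)}$, and both converge to their infinite products since $\sum_k p^{-k} < \infty$. So the pointwise limit exists and has the stated form; no disagreement there.

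The problem is that you have explicitly deferred the one part of the statement that actually requires an argument. The proposition asserts that the limit is the limiting \emph{distribution}, i.e.\ that $\sum_{\lambda} f(\lambda) = 1$, and your proposal ends by naming this as ``the main obstacle in a fully complete treatment'' without resolving it. As you note, Fatou only gives $\sum_\lambda f(\lambda) \le 1$; mass could in principle escape to partitions of growing length or weight. The paper closes this gap by exhibiting an explicit dominating function: since $d_\lambda(p^{-1})$ is a product of at most $\ell(\lambda)$ factors each at least $1-1/p$, one has
\[
f_n(\lambda) \le \frac{1}{\beta_{\infty}\,(1-1/p)^{\ell(\lambda)}}\, p^{-\sum_i i\lambda_i} =: g(\lambda),
\]
uniformly in $n$, and $g$ is summable because a partition of length $l$ has $\sum_i i\lambda_i \ge l(l+1)/2$, so
\[
\sum_{\lambda} g(\lambda) \ll \sum_{l=0}^{\infty} (1-1/p)^{-l}\, p^{-l(l+1)/2} < +\infty.
\]
Dominated convergence then yields $\sum_\lambda f(\lambda) = \lim_n \sum_\lambda f_n(\lambda) = 1$. (The paper also records the alternative you mention, namely the Hall--Littlewood identity $\sum_\lambda Q_\lambda(\mathbf{x};t)/d_\lambda(t) = \prod_i \frac{1-tx_i}{1-x_i}\prod_{i<j}\frac{1-tx_ix_j}{1-x_ix_j}$ at the principal specialization, which evaluates the sum directly.) To make your proof complete you need to actually supply one of these two arguments rather than gesture at them.
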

\begin{proof} It is clear that the point-wise limit of $f_n(\lambda)$ exists and is positive, so it remains to check $\sum_{\lambda} f(\lambda) = 1$. Function $f_n(\lambda)$ is bounded from above by an integrable function
\[
f_n(\lambda)\le \frac{1}{\beta_{\infty} d_{\lambda}(1/p)} p^{-\sum_i i\lambda_i} \le \frac{1}{\beta_{\infty}(1-1/p)^{\ell(\lambda)}} p^{-\sum_i i\lambda_i} 
=: g(\lambda)\,.
\]
and
\[
\sum_\lambda g(\lambda) \ll \sum_{l=0}^{\infty} (1-1/p)^{-l} \sum_{\lambda:\ell(\lambda) = l} \prod_{i=1}^{l}p^{-i\lambda_i} \ll \sum_{l=0}^{\infty} (1-1/p)^{-l} p^{-\frac{l(l+1)}{2}} < +\infty\,.
\]
By dominated convergence $\sum_{\lambda} f(\lambda) = \lim_{n\rightarrow\infty} \sum_{\lambda} f_n(\lambda) =  1$, thus $f(\lambda)$ defines a distribution. 
\end{proof}

\begin{remark}\normalfont
Note that $f_n(\lambda)$ is a rational function in $t = p^{-1}$, so $\sum_{\lambda} f_n(\lambda) =1
$ is a rational identity for all $t \in (0,1)$, and more generally can be seen as a formal identity. The proof by dominated convergence works for all $t \in (0,1)$ and can be interpreted as a probabilistic proof of the identity
\[
\sum_{\lambda} \frac{Q_{\mathbf{\lambda}} (t,t^2,\ldots;t)}{ d_{\mathbf{\lambda}}(t)} = \prod_{j=0}^{\infty} (1-t^{2j+1})^{-1}\,.
\]
\end{remark}

\subsection{$\QQ_p$-equivalence}
Let $p$ be an odd prime, and let $Q$ be a random quadratic form in $n$ variables with independent coefficients distributed according to the Haar measure on $\ZZ_p$. This corresponds to the distribution on $S_n(\ZZ_p)$ we considered earlier. Let $a \in \ZZ_p/(\ZZ_p)^2$ and $b \in\{\pm 1\}$, and denote the probability that $Q$ has invariants $d(Q) = a$ and $c(Q) = b $ by $\rho_n(a,b)$ for $n \in \NN$. When $n=0$ set $\rho_0(a,b) = \mathbf{1}\{a=1,b=1\}$.

Recall that
\[
\chi(x) = \begin{cases}+1\,, & \text{if $x \in \ZZ_p^{\times} \cap (\ZZ_p)^2$}\,;\\
-1\,,& \text{if $x \in \ZZ_p^{\times} \setminus (\ZZ_p)^2$}\,;\\
0\,,& \text{if $x \in p\ZZ_p$}\,.
\end{cases}
\]
In particular, $\chi(1) = +1$ and $\chi(r) = -1$, where $r \in \ZZ_p^{\times}$ is a fixed non-square in $\ZZ_p$. We will write $\chi(1) = +$ and $\chi(r) = -$ to match the notation of $\alpha_n^{\pm}$. Denote $\chi(-1)$ by $\varepsilon$. 

We will also need the following analogue of Corollary \ref{genrank} for alternating matrices. Alternating, or skew-symmetric, matrices are matrices satisfying $A^\top = -A$. In particular, this implies that they have zero diagonal.
\begin{lem}[Carlitz \cite{carlitz1954representations2}] Let $A$ be a random alternating $n \times n$ matrix with independent entries distributed uniformly on $\FF_q$ (of odd characteristic). Then for all $r$ such that $n-r$ is even
\[
\PP\{\rk\, A = n - r\} = q^{-r(r-1)/2} \frac{\pi_n(q)}{\pi_r(q)\beta_{n-r}(q)}\,.
\]
If $n-r$ is odd, $\PP\{\rk\, A = n - r\} = 0$.
\label{altdistr}\end{lem}

Condition on $m_0$ and $s_0$ to obtain the recursive expression for $\rho_n$:
\[
\rho_n(a,b) = \sum_{\substack{l=0\\ d \in \{1,r\}}}^n \PP_n\{m_0 = n-l, s_0 = \chi(d)\} \rho_l(a^\prime(l,d),b^\prime(l,d))\,,
\]
where $a^\prime(l,d) = adp^l$ and $b^\prime(l,d) = b \varepsilon^{\frac{l(l-1)}{2}} \left<a,d\right>\left<p,ad\right>^{l-1}$. This corresponds to separating out the non-zero mod $p$ part, factoring out the rest by $p$ and inferring the invariants of the remaining quadratic form in $n-m_0$ variables. By Theorem \ref{symdensity}
\[
\PP_n\{m_0 = n-l, s_0 = s\} = p^{-\frac{l(l+1)}{2}} \frac{\pi_n}{ \pi_l \alpha_{n-l}^s}\,,
\]
hence
\begin{equation}\label{recurr}
 \begin{aligned}
\rho_n(a,b) = 
p^{-\frac{n(n+1)}{2}} \rho_n(a^\prime(n,1),b^\prime(n,1)) + \sum_{l=0}^{n-1} p^{-\frac{l(l+1)}{2}} \frac{\pi_n}{ \pi_l \alpha_{n-l}^+} \rho_l(a^\prime(l,1),b^\prime(l,1))\,+ \\ \sum_{l=0}^{n-1} p^{-\frac{l(l+1)}{2}} \frac{\pi_n}{ \pi_l \alpha_{n-l}^-} \rho_l(a^\prime(l,r),b^\prime(l,r))\,.
\end{aligned}  
\end{equation}
First, we will compute
\begin{align*}
     \sigma_n(a):= &\ \rho_n(a,1) + \rho_n(a,-1) = \PP_n\{d(Q) = a\}\,,\\
     \Delta_n(a) := &\ \rho_n(a,1) - \rho_n(a,-1)\,.
\end{align*}
Then $\rho_n(a,b) = (\sigma_n(a) + b \Delta_n(a))/2$.

Let us start with the distribution of the discriminant $\sigma_n(a)$.

\begin{lem}\label{sigmalem} Let $n \in \ZZ^{\ge 0}$, set $t = 1/p$ and $s = \chi(a)$. If $n$ is even, we have
\[
\sigma_n(a)  = \begin{cases}
\frac{(1+ s(\varepsilon t)^{n/2}) ( 1 - s \varepsilon^{n/2} t^{n/2+2})}{2(1+t)(1-t^{n+1})}\,, & \text{if $a \in \{1,r\}$}\,;\\
\\
\frac{t(1-t^{n})}{2(1+t)(1-t^{n+1})}\,, & \text{if $a \in \{p,pr\}$}\,;
\end{cases}
\]
and in $n$ is odd,
\[
\sigma_n(a)  = \begin{cases}
\frac{1}{2\brackets{1 + t}}\,, & \text{if $a \in \{1,r\}$}\,;\\
\\
\frac{t}{2\brackets{1 + t}}\,, & \text{if $a \in \{p,pr\}$}\,.
\end{cases}
\]
\end{lem}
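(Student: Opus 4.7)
The plan is to sum the recurrence~(\ref{recurr}) over $b \in \{\pm 1\}$, obtaining a scalar recurrence for $\sigma_n(a)$, and then solve it by strong induction on $n$. Since $b \mapsto bc$ is a bijection of $\{\pm 1\}$ for any $c \in \{\pm 1\}$, each $\rho_l$-term on the right-hand side collapses to $\sigma_l$, yielding
\begin{equation*}
\sigma_n(a) = p^{-n(n+1)/2}\sigma_n(ap^n) + \sum_{l=0}^{n-1}\frac{p^{-l(l+1)/2}\pi_n}{\pi_l}\left[\frac{\sigma_l(ap^l)}{\alpha_{n-l}^+} + \frac{\sigma_l(arp^l)}{\alpha_{n-l}^-}\right].
\end{equation*}
Three ingredients drive the simplification: (i) $\alpha_j^+ = \alpha_j^-$ for $j$ odd, while for $j = 2t$ even, $1/\alpha_j^+ + 1/\alpha_j^- = 1/\beta_j$ and $1/\alpha_j^+ - 1/\alpha_j^- = \brackets{\frac{\varepsilon}{p}}^{t}/(p^t\beta_j)$; (ii) $ap^n$ is square-equivalent to $a$ for $n$ even and to $ap$ for $n$ odd; (iii) the base case $\rho_0(a,b) = \mathbf{1}\{a=1,b=1\}$ gives $\sigma_0(1) = 1$ and $\sigma_0(a) = 0$ otherwise, matching the claim at $n=0$.

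For odd $n$, I would first establish the symmetries $\sigma_n(1) = \sigma_n(r)$ and $\sigma_n(p) = \sigma_n(pr)$: subtracting the recurrences at $a$ and $ar$, the inductive hypothesis (providing these symmetries for $l < n$) together with $\alpha_{n-l}^+ = \alpha_{n-l}^-$ for $n-l$ odd yields a homogeneous equation that forces the difference to vanish. With the symmetries in place, the recurrence reduces to a $2\times 2$ linear system relating $\sigma_n(1)$ and $\sigma_n(p)$ through the self-term $\sigma_n(ap^n) = \sigma_n(ap)$, solvable in closed form from the inductive values at $l < n$, and delivering the stated constants.

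For even $n$, set $u_n(a) := \sigma_n(a)+\sigma_n(ar)$ and $w_n(a) := \sigma_n(a)-\sigma_n(ar)$. Adding the recurrences at $a$ and $ar$ and using $1/\alpha^+ + 1/\alpha^- = 1/\beta$ gives a decoupled recurrence for $u_n$; subtracting and using $1/\alpha^+ - 1/\alpha^- = \brackets{\frac{\varepsilon}{p}}^{t}/(p^t\beta)$ gives one for $w_n$ that imports the factor $\brackets{\frac{\varepsilon}{p}}^{n/2}$ appearing in the claim. In particular, $w_n(a) = 0$ for $a \in \{p, pr\}$ because no partition with $\sum k_i$ odd can have all multiplicities even, forcing $\sigma_n(p) = \sigma_n(pr)$ and giving the second case of the even-$n$ formula. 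Inserting the inductive expressions and simplifying then produces the stated closed forms for $u_n$ and $w_n$ on $\{1,r\}$.

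The main obstacle is the algebraic simplification in the inductive step: evaluating sums of the shape $\sum_{l=0}^{n-1} p^{-l(l+1)/2}\pi_n/(\pi_l\beta_{n-l})\cdot f(l)$, where $f(l)$ is the inductive closed form, and checking that the resulting rational functions in $p^{-1}$ collapse to the claimed expressions with denominator $(1+1/p)(1-1/p^{n+1})$. I expect this to be cleanest via a telescoping identity after clearing common factors, or alternatively by recognising the partial sums as specialisations of the Hall--Littlewood $Q$-polynomial and invoking the Macdonald identity $\sum_\lambda Q_\lambda(\mathbf{x};t)/d_\lambda(t) = \prod_i(1-tx_i)/(1-x_i)\prod_{i<j}(1-tx_ix_j)/(1-x_ix_j)$ recalled earlier in the paper.
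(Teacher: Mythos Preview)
Your setup---summing~(\ref{recurr}) over $b$, inducting on $n$, and exploiting the identities for $1/\alpha_j^{\pm}$---is exactly what the paper does, and your $2\times 2$ system for odd $n$ matches the paper's treatment. The divergence is at the point you yourself flag as the ``main obstacle'': evaluating the sums $\sum_{l} p^{-l(l+1)/2}\pi_n/(\pi_l\beta_{n-l})\cdot f(l)$. Rather than telescoping or invoking the Macdonald identity, the paper pairs the odd-$l$ and even-$l$ terms (after inserting the inductive values of $\sigma_l$) so that the resulting sum is recognised as the total mass of a rank distribution already computed in the paper---specifically, the rank distribution of alternating matrices over $\FF_p$ (equivalently, Cor.~\ref{ffrank} with terms grouped by two). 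These sums therefore equal $1$ (or $1$ minus the omitted rank-zero term), and the closed form drops out with no further algebra. This is both more elementary and more transparent than either route you propose; the Macdonald identity would presumably also work, but it is a heavier tool and you would still need to match the specialisation carefully.

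One small point: your justification that $w_n(p)=0$ for even $n$ ``because no partition with $\sum k_i$ odd can have all multiplicities even'' does not by itself prove $\sigma_n(p)=\sigma_n(pr)$; the discriminant class depends on the signatures $s_k$ as well as on the $k_i$, so the parity observation about multiplicities is not the operative mechanism. The equality does hold, but the clean way to see it is inductively from the recurrence itself (subtract the equations at $a=p$ and $a=pr$ and use the inductive symmetries), exactly as you do for $\sigma_n(1)=\sigma_n(r)$ in the odd case.
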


\begin{proof}
First, condition on the multiplicity $m_0$ and the signature $s_0$ and write the recurrence for $\sigma_n(a)$:
\begin{equation}\label{sigmarec}
   \begin{aligned}
   \sigma_n(a) = \sum_{l=0}^{n-1} p^{-\frac{l(l+1)}{2}} \brackets{\frac{\pi_n}{ \pi_l \alpha_{n-l}^+} \sigma_l(ap^l) \ + 
\frac{\pi_n}{ \pi_l \alpha_{n-l}^-} \sigma_l(arp^l)} + p^{-\frac{n(n+1)}{2}} \sigma_n(ap^n)\,.
   \end{aligned} 
\end{equation}
To reduce the number of cases, observe that if $\text{val}_p(\det Q)$ is odd, or if $n$ is odd, then $Q$ has an elementary divisor of odd multiplicity $l$. Then we can choose $s_l \in \{\pm\}$ with equal probability as $\alpha_{l}^+ = \alpha_{l}^-$, hence $\sigma_n(1) = \sigma_n(p)$ for odd $n$, and $\sigma_n(p) = \sigma_n(rp)$ for all $n$. 

The statement holds for $n = 0$ and is also easy to verify for small $n$. Suppose it holds for all $l < n$ and substitute coefficients with $l < n$ into \eqref{sigmarec}. For even $n$, and $a = 1$ or $r$ with $s = \chi(a)$
\begin{align*}
\sigma_n(a) = t^{\frac{n(n+1)}{2}}\sigma_n(a) +  \sum_{\substack{l \text{ odd}\\ l < n}}  t^{\frac{l(l+1)}{2}} \frac{t\pi_n}{ 2(1+ t)\pi_l \beta_{n-l}}  +
\sum_{\substack{l \text{ even}\\ l < n}} t^{\frac{l(l+1)}{2}} \frac{\pi_n}{ \pi_{l+1} \beta_{n-l}} \frac{ 1 - t^{l+2} + s (1-t^2)(\varepsilon t)^{\frac{n}{2}}}{2(1 + t)}\, = \\
 t^{\frac{n(n+1)}{2}}\sigma_n(a) +  \frac{ 1 - t^{n+2} + s (1-t^2)(\varepsilon t)^{\frac{n}{2}}}{2(1 + t)(1 -t^{n+1})} \sum_{\substack{l \text{ even}\\ l < n}} t^{\frac{l(l+1)}{2}} \frac{\pi_{n+1}}{ \pi_{l+1} \beta_{n-l}}\,.
\end{align*}
For $a = p$ or $pr$ we have
\begin{align*}
\sigma_n(a) = t^{\frac{n(n+1)}{2}}\sigma_n(a) + \sum_{\substack{l \text{ odd}\\ l < n}}  t^{\frac{l(l+1)}{2}} \frac{\pi_n}{ 2(1+ t)\pi_l \beta_{n-l}} + 
\sum_{\substack{l \text{ even}\\ l < n}} t^{\frac{l(l+1)}{2}} \frac{\pi_n}{ \pi_{l+1} \beta_{n-l}} \frac{t( 1 - t^{n})}{2(1 + t)}\,= \\
t^{\frac{n(n+1)}{2}}\sigma_n(a) +  \frac{ 1 - t^{n}}{2(1 + t)(1 -t^{n+1})} \sum_{\substack{l \text{ even}\\ l < n}} t^{\frac{l(l+1)}{2}} \frac{\pi_{n+1}}{ \pi_{l+1} \beta_{n-l}}\,.
\end{align*}
The sum over even $l$ equals $1 - p^{-n(n+1)/2}$ as it is the sum over the distribution of rank of $(n+1) \times (n+1)$ alternating matrices over $\FF_p$ as in Lemma \ref{altdistr} without the rank zero term. This implies the statement of the lemma for even $n$. For odd $n$
\begin{align*}
\sigma_n(1) = t^{\frac{n(n+1)}{2}}\sigma_n(p) + \sum_{\substack{l \text{ odd}\\ l < n}}  t^{\frac{l(l+1)}{2}} \frac{t\pi_n}{ 2(1+ t)\pi_l \beta_{n-l}} + \sum_{\substack{l \text{ even}\\ l < n}} t^{\frac{l(l+1)}{2}} \frac{\pi_n}{ \pi_{l+1} \beta_{n-l}} \frac{ 1 - t^{l+2}}{2(1 + t)}\, = \\
t^{\frac{n(n+1)}{2}}\sigma_n(p) +  \frac{1}{2(1 + t)} \sum_{\substack{l \text{ even}\\ l < n}} t^{\frac{l(l+1)}{2}} \frac{\pi_{n}}{ \pi_{l+1} \beta_{n-l}} - t^{\frac{n(n+1)}{2}}\frac{t}{2(1+t)}\,;\\
\sigma_n(p) = t^{\frac{n(n+1)}{2}}\sigma_n(1) + \sum_{\substack{l \text{ odd}\\ l < n}}  t^{\frac{l(l+1)}{2}} \frac{\pi_n}{ 2(1+ t)\pi_l \beta_{n-l}} + \sum_{\substack{l \text{ even}\\ l < n}} t^{\frac{l(l+1)}{2}} \frac{\pi_n}{ \pi_{l+1} \beta_{n-l}} \frac{ t(1 - t^{l})}{2(1 + t)}\,= \\
t^{\frac{n(n+1)}{2}}\sigma_n(1) +  \frac{t}{2(1 + t)} \sum_{\substack{l \text{ even}\\ l < n}} t^{\frac{l(l+1)}{2}} \frac{\pi_{n}}{ \pi_{l+1} \beta_{n-l}} - t^{\frac{n(n+1)}{2}}\frac{1}{2(1+t)}\,.
\end{align*}
The sum over the even indices is the sum over the distribution of rank of $n \times n$ alternating matrices over $\FF_p$, hence equals $1$. Solving this system we obtain $\sigma_n(1)=\sigma_n(r)$ and $\sigma_n(p) = \sigma_n(rp)$.
\end{proof}
Now let us compute the discrepancy between the distribution of the discriminant of quadratic forms with different Hasse invariants.

\begin{lem}\label{deltalem}
Let $n \in \ZZ^{\ge 0}$, and set $t = 1/p$. Then
\[
\Delta_n(a)  = \begin{cases}
\frac{\pi_n}{\beta_{n+1}\alpha_n^s}\,, & \text{if $a = 1$ or $r$, and $s = \chi(a)$}\,;\\
\\
\varepsilon^{\frac{n-1}{2}} t^{\frac{n+1}{2}} \frac{\pi_n}{2\beta_{n+1}\beta_{n-1}}\,, & \text{if $a = p$ or $pr$, and $n$ is odd}\,;\\
\\
0\,, & \text{if $a = p$ or $pr$, and $n$ is even}\,.
\end{cases}
\]
\end{lem}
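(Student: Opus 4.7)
My plan is to mimic the derivation of Lemma~\ref{sigmalem}. Using $\rho_l(a',c) - \rho_l(a',-c) = c\,\Delta_l(a')$ for $c \in \{\pm1\}$ and subtracting the instance of~(\ref{recurr}) with $b=-1$ from the one with $b=+1$, I obtain the recurrence
\begin{align*}
\Delta_n(a) &= p^{-\frac{n(n+1)}{2}} \varepsilon^{\frac{n(n-1)}{2}} \langle p,a\rangle^{n-1} \Delta_n(ap^n) \\
&\quad + \sum_{l=0}^{n-1} p^{-\frac{l(l+1)}{2}} \varepsilon^{\frac{l(l-1)}{2}} \Bigl[ \frac{\pi_n}{\pi_l \alpha_{n-l}^+} \langle p,a\rangle^{l-1} \Delta_l(ap^l) \\
&\qquad + \frac{\pi_n}{\pi_l \alpha_{n-l}^-} \langle a,r\rangle^l \langle p,ar\rangle^{l-1} \Delta_l(arp^l) \Bigr].
\end{align*}
The base cases $n=0,1$ follow directly from $\rho_0(a,b) = \mathbf{1}\{a=1,b=1\}$ together with a small explicit computation, after which I proceed by strong induction on $n$.

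The induction step splits into the three regimes of the claimed formula: (i)~$a \in \{1,r\}$; (ii)~$a \in \{p,pr\}$ with $n$ odd; and (iii)~$a \in \{p,pr\}$ with $n$ even, where the claim is $\Delta_n(a) = 0$. For fixed $a$, the arguments $ap^l$ and $arp^l$ alternate between unit-class and uniformiser-class as $l$ varies, so the inductive hypothesis contributes different closed forms depending on the parity of $l$, splitting each sum into even-$l$ and odd-$l$ subsums. The self-referential term $\Delta_n(ap^n)$ equals $\Delta_n(a)$ when $n$ is even and $\Delta_n(ap)$ when $n$ is odd, so for odd $n$ one first sets up and solves a small linear system in $\{\Delta_n(a), \Delta_n(ap)\}$, exactly as was done for $\sigma_n$ in Lemma~\ref{sigmalem}. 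After substituting the inductive formulas, the remaining weighted sums in $l$ are precisely the distributions of ranks of $n\times n$ symmetric or alternating matrices over $\FF_p$ appearing in the proof of Lemma~\ref{sigmalem}, and collapse to $1$ or to $1-p^{-n(n+1)/2}$.

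The main obstacle will be sign bookkeeping. The factor $\varepsilon^{l(l-1)/2}$ depends on $l \bmod 4$, while the Hilbert symbols $\langle p,a\rangle^{l-1}$, $\langle a,r\rangle^l$ and $\langle p,ar\rangle^{l-1}$ depend on the parity of $l$ and on $\chi(a)$ (using $\langle u_1,u_2\rangle=1$ for units $u_1,u_2$, $\langle p,u\rangle=\chi(u)$ for a unit $u$, and $\langle p,p\rangle=\varepsilon$ over $\QQ_p$ with $p$ odd). I expect these factors to collapse to a single $\varepsilon(\varepsilon/p)^{(n+1)/2}$ in regime~(ii), and in regime~(iii) to produce equal and opposite contributions from the $\alpha_{n-l}^+$ and $\alpha_{n-l}^-$ sums that cancel exactly; in regime~(i) the only residual sign dependence should be the choice $s=\chi(a)$ selecting $\alpha_n^s$. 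Once the signs are under control, the combinatorial core of the argument is the same finite-field rank identity already exploited in the proof of Lemma~\ref{sigmalem}.
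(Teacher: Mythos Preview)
Your overall architecture matches the paper exactly: set up the recurrence~(\ref{deltarec}), induct on $n$, split the sum by parity of $l$, and for odd $n$ solve a $2\times 2$ linear system in $\Delta_n(a),\Delta_n(ap)$. Your expectation in regime~(iii) is also correct: for $a\in\{p,pr\}$ and $n$ even, the even-$l$ terms vanish because $\Delta_l(p)=\Delta_l(pr)=0$ by induction, and in the odd-$l$ terms both $n-l$ and $l$ are odd, so $\alpha_{n-l}^+=\alpha_{n-l}^-$ and $\Delta_l(1)=\Delta_l(r)$, forcing exact cancellation.

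There is, however, a genuine gap in regime~(i) (and correspondingly in regime~(ii)). The weighted sums you obtain after substituting the inductive hypothesis are \emph{not} the $\FF_p$ symmetric/alternating rank distributions from Lemma~\ref{sigmalem}. Once you plug in $\Delta_l(1)=\pi_l/(2\beta_{l+1}\beta_l)$ for odd $l$ and $\Delta_l(1)=\pi_l/(\beta_{l+1}\alpha_l^+)$ for even $l$, the sign factors $\varepsilon^{l(l-1)/2}$ combine with the $(\varepsilon/p)^{l/2}$ coming from $1/\alpha_l^{\pm}$ and $1/\alpha_{n-l}^{\pm}$ so that \emph{only} $\beta$-type products survive, and the paper reorganises the result (for $a=1$, $n$ even) as
\[
\frac{\pi_n}{\beta_n\alpha_n^+}\sum_{j=0}^{n/2} p^{-2j^2}\,\frac{\beta_n^2}{\beta_{2j}^2\,\beta_{n-2j}}.
\]
Since $\beta_{2j}=\pi_j(p^2)$, this sum is precisely $\sum_r \PP\{\rk X = r\}=1$ for a uniformly random $\tfrac{n}{2}\times\tfrac{n}{2}$ matrix over $\FF_{p^2}$, i.e.\ Corollary~\ref{genrank} with $q=p^2$; for odd $n$ the analogous sums are the $\FF_{p^2}$ rank distributions for $\tfrac{n+1}{2}\times\tfrac{n+1}{2}$ and $\tfrac{n+1}{2}\times\tfrac{n-1}{2}$ matrices. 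If you try to match these against the $\FF_p$ alternating-rank sums $\sum p^{-l(l+1)/2}\pi_{n+1}/(\pi_{l+1}\beta_{n-l})$ from Lemma~\ref{sigmalem} you will find the denominators do not line up (they contain $\pi_{l+1}$ rather than $\beta_l\beta_{l+1}$), and the identity you need will not be available. The missing idea is this passage to $q=p^2$.
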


\begin{proof} Similarly to the previous lemma, prove the statement by induction. Conditioning on the multiplicity $m_0$, write the recurrence for $\Delta_n(a)$ using (\ref{recurr}):
\begin{equation}\label{deltarec}
\begin{aligned}
\Delta_n(a) = \sum_{l=0}^{n} p^{-\frac{l(l+1)}{2}} \varepsilon^{\frac{l(l-1)}{2}} \frac{\pi_n}{ \pi_l \alpha_{n-l}^+}   \left<p,a\right>^{l-1} \Delta_l(ap^l) + \\
 \left<a,r\right>\sum_{l=0}^{n-1} p^{-\frac{l(l+1)}{2}} \varepsilon^{\frac{l(l-1)}{2}} \frac{\pi_n}{ \pi_l \alpha_{n-l}^-} \left<p,ar\right>^{l-1} \Delta_l(arp^l)\,.
\end{aligned}
\end{equation}
    The statement holds for $n = 0$. It is also easy to check for $n\le 2$. Suppose the statement holds for all $l < n$. Split this sum into the sum over odd and even indices, and substitute $\Delta_l(\cdot)$ for $l < n$ into (\ref{deltarec}). Consider first $a = 1$ or $r$ with $s = \chi(a)$, then
\begin{align*}
\Delta_n(a)- t^{\frac{n(n+1)}{2}} \varepsilon^{\frac{n(n-1)}{2}} s^{n-1} \Delta_n(ap^n)\ = \sum_{\substack{l \text{ odd}\\ l < n}} t^{\frac{l(l+1)}{2}} \varepsilon^{\frac{l(l-1)}{2}} \frac{\pi_n}{\pi_l} \bigg(\frac{\Delta_l(ap)}{\alpha_{n-l}^+}  + \frac{\Delta_l(apr)}{\alpha_{n-l}^-} \bigg)\, +\\ 
+\ s \sum_{\substack{l \text{ even}\\ l < n}} t^{\frac{l(l+1)}{2}} \varepsilon^{\frac{l(l-1)}{2}} \frac{\pi_n}{\pi_l} \bigg(\frac{\Delta_l(a)}{\alpha_{n-l}^+} - \frac{\Delta_l(ar)}{\alpha_{n-l}^-}  \bigg)\,.
\end{align*}
When $n$ is even, we have
\begin{align*}
\Delta_n(a) (1 - st^{\frac{n(n+1)}{2}} \varepsilon^{\frac{n(n-1)}{2}}) = \sum_{\substack{l \text{ odd}\\ l < n}} t^{\frac{(l+1)^2}{2}}  \frac{\pi_n}{ 2\beta_l \beta_{l+1} \beta_{n-l}}\ +\ \sum_{\substack{l \text{ even}\\ l < n}} t^{\frac{l(l+1)}{2}} \frac{\pi_n}{ 2\beta_l \beta_{l+1} \beta_{n-l}} \brackets{t^{\frac{l}{2}} +s \varepsilon^{\frac{n}{2}} t^{\frac{n-l}{2}}}\,=\\ 
\frac{\pi_n}{\beta_n \alpha_n^s} \bigg(\beta_n + \sum_{\substack{l \text{ odd}\\  l < n}}  t^{\frac{(l+1)^2}{2}} \frac{\beta_n^2}{ \beta_{l+1}^2 \beta_{n-l-1}} - s\,t^{\frac{n(n+1)}{2}} \varepsilon^{\frac{n(n-1)}{2}}\bigg) =\\
\frac{\pi_n}{\beta_n \alpha_n^s}  \bigg(\sum_{j=0}^{n/2} t^{2j^2} \frac{\beta_n^2}{\beta_{2j}^2 \beta_{n-2j}} - s\,t^{\frac{n(n+1)}{2}} \varepsilon^{\frac{n(n-1)}{2}}\bigg)\,.
\end{align*}
The sum over $j$ is the sum over the distribution of rank of general square $n/2 \times n/2$ matrices over $\FF_{p^2}$, and by Corollary \ref{genrank} it equals $1$. Hence the statement holds for $\Delta_n(1)$ and $\Delta_n(r)$. If $n$ is odd, 
\begin{align*}
    \Delta_n(1) - \varepsilon^{\frac{n(n-1)}{2}} t^{\frac{n(n+1)}{2}} \Delta_n (p) = \sum_{\substack{l \text{ odd}\\ l < n}} t^{\frac{(l+1)^2}{2}}  \frac{\pi_n}{ 2\beta_l \beta_{l+1} \beta_{n-l}} + \sum_{\substack{l \text{ even}\\ l < n}} t^{\frac{l(l+2)}{2}} \frac{\pi_n}{ 2\beta_l^2 \beta_{n-l}} = \\
     \frac{\pi_{n}}{2\beta_{n+1}\beta_{n-1}}\brackets{\beta_{n+1} +  \sum_{\substack{l \text{ odd}\\ l < n}} t^{\frac{(l+1)^2}{2}} \frac{\beta_{n+1}^2}{ \beta_{l+1}^2 \beta_{n-l}}} = \frac{\pi_{n}}{2\beta_{n+1}\beta_{n-1}} \sum_{j=0}^{(n-1)/2} t^{2j^2} \frac{\beta_{n+1}^2}{ \beta_{2j}^2 \beta_{n-2j+1}}.
\end{align*}
The sum over $j$ is the sum over the distribution of rank of $(n+1)/2 \times (n+1)/2$ matrices over $\FF_{p^2}$ except rank zero, hence it equals $1 - p^{-(n+1)^2/2}$. 

Further, $\Delta_n(p) = \Delta_n(pr)$ satisfies the recurrence
\begin{align*}
\Delta_n(p) = \varepsilon^{\frac{(n-1)(n+2)}{2}} t^{\frac{n(n+1)}{2}} \Delta_n (p^{n+1}) \ + \sum_{\substack{l \text{ odd}\\ l < n}} t^{\frac{l(l+1)}{2}} \varepsilon^{\frac{l(l-1)}{2}} \frac{\pi_n}{\pi_l} \brackets{\frac{\Delta_l(1)}{\alpha_{n-l}^+}\ - \frac{\Delta_l(r)}{\alpha_{n-l}^-} }\, +\\ 
+\ \varepsilon \sum_{\substack{l \text{ even}\\ l < n}} t^{\frac{l(l+1)}{2}} \varepsilon^{\frac{l(l-1)}{2}} \frac{\pi_n}{\pi_l} \brackets{\frac{\Delta_l(p)}{\alpha_{n-l}^+} \ + \frac{\Delta_l(pr)}{\alpha_{n-l}^-}  }\,,
\end{align*}
where we conveniently have $\Delta_l(p) = \Delta_l(pr) = 0$ for even $l < n$. If $n$ is even, the sum over odd $l$ is also zero as $n-l$ is odd and $\alpha_{n-l}^+=\alpha_{n-l}^-$. Hence $\Delta_n(a) = 0$. For odd $n$
\begin{align*}
    \Delta_n(p) - \varepsilon^{\frac{n(n-1)}{2}} t^{\frac{n(n+1)}{2}} \Delta_n (1) =
     \varepsilon^{\frac{n-1}{2}} t^{\frac{n+1}{2}} \frac{\pi_{n}}{2\beta_{n+1}\beta_{n-1}} \sum_{\substack{l \text{ odd}\\ l < n}} t^{\frac{(l+1)(l-1)}{2}} \frac{\beta_{n+1}\beta_{n-1}}{ \beta_{l-1} \beta_{l+1} \beta_{n-l}} =\\  \varepsilon^{\frac{n-1}{2}} t^{\frac{n+1}{2}} \frac{\pi_{n}}{2\beta_{n+1}\beta_{n-1}} \sum_{j=0}^{(n-3)/2} t^{2j(j+1)} \frac{\beta_{n+1}\beta_{n-1}}{ \beta_{2j} \beta_{2j+2} \beta_{n-1-2j}}\,.
\end{align*}
The sum over $j$ is the sum over the distribution of rank for $(n+1)/2 \times (n-1)/2$ matrices over $\FF_{p^2}$ as in Corollary \ref{genrank} except the last term corresponding to rank $0$, hence it equals $1 - p^{-(n^2-1)/2}$. Solving this simple system we see that the statement holds for odd $n$.
\end{proof}
Combining Lemma \ref{sigmalem} and \ref{deltalem}, we obtain the following theorem.

\begin{thm} \label{qfclass}Let $n \in \ZZ^{\ge 1}$, and set $t=1/p$. Then for odd $n$
\[
\rho_n(a,b) = \begin{cases}
\frac{1}{4(1+t)} + b \frac{\pi_n}{4\beta_{n+1}\beta_{n-1}}\,, & \text{if $a \in \{1,r\}$}\,;\\
\\
\frac{t}{4(1+t)} + b \varepsilon^{\frac{n-1}{2}} t^{\frac{n+1}{2}} \frac{\pi_n}{4\beta_{n+1}\beta_{n-1}}\,, & \text{if $a \in \{p,pr\}$}\,;
\end{cases}
\]
and for even $n$ and $s = \chi(a)$
\[
\rho_n(a,b) = \begin{cases}
\frac{(1+ s(\varepsilon t)^{n/2}) ( 1 - s \varepsilon^{n/2} t^{n/2+2})}{4(1+t)(1-t^{n+1})} + b \frac{\pi_n}{2\beta_{n}\alpha_{n}^s}\,, & \text{if $a \in \{1,r\}$}\,;\\
\\
\frac{t(1-t^n)}{4(1+t)(1-t^{n+1})}\,, & \text{if $a\in\{p,pr\}$}\,.
\end{cases}
\]
\end{thm}
This also gives us the asymptotic distribution of quadratic forms independent of parity.
\begin{cor}
As $n \to \infty$ 
\[
\rho(a,b) := \lim_{n\rightarrow\infty} \rho_n(a,b) = \begin{cases} 
\frac{1}{4(1+t)} + b \frac{\pi_{\infty}}{4\beta_{\infty}^2}\,, & \text{if $a \in \{1,r\}$}\,;\\
\frac{t}{4(1+t)}\,, & \text{if $a \in \{p,pr\}$}\,.
\end{cases}
\]
\end{cor}
We expect that this distribution should be universal for random quadratic forms with independent coefficients from a large class of distributions not necessarily uniform.

Now let us give a few examples of problems where the distributions we derived come in handy.


\section{Applications}
In this section we would like to illustrate that the distribution of equivalence classes is a versatile tool for approaching different kinds of questions about random matrices. Some of the results have already been established, and some are new, in both cases we give short straightforward proofs which do not require any additional knowledge other than the formulae we derive and some very general mathematical background.

We will write $\PP_n$ for the probability measure on $S_n(\ZZ_p)$. We will also use the standard Vinogradov notation with symbols $\sim$, $\asymp$ and $\ll$ as well as Big $O$ notation. 

\subsection{Determinant and rank}

The following result is not new (e.g. see Brent and McKay \cite{brent1988determinants}), but we would like to give a short proof using the elementary divisor distribution.

\begin{cor}\label{easydet}
Let $Q_n$ be a random $n \times n$ symmetric matrix with independent entries distributed according to the Haar measure on $\ZZ_p$, then
\[
\PP_n\{|\det Q_n|_p = p^{-k}\} = p^{-k}(1-p^{-n}) + O(p^{-3k/2})\,.
\]
\end{cor}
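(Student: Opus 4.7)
The plan is to use Corollary \ref{eldivcor} to write
\[
\PP_n\{|\det X_n| = p^{-k}\} = \sum_{\substack{\lambda\,\vdash\, k \\ \ell(\lambda)\le n}} \frac{\pi_n\, p^{-\sum_i i\lambda_i}}{\prod_{j\ge 0} \beta_{m_j(\lambda)}},
\]
where $m_j(\lambda) = \#\{i : k_i = j\}$. Direct enumeration is delicate because contributions from many partitions conspire to yield the clean prefactor $1-p^{-n}$, so instead I would exploit the self-similar block structure of the random symmetric matrix by conditioning on $m_0 = \#\{i : k_i = 0\}$, the $\FF_p$-rank of $X \bmod p$. Summing Corollary \ref{eldivcor} over all $\Sigma$ with fixed $m_0 = m$ gives the explicit formula
\[
\PP_n\{m_0 = m\} = \frac{\pi_n\, p^{-(n-m)(n-m+1)/2}}{\beta_m\, \pi_{n-m}},
\]
and a direct check shows that the conditional joint law of $(k_{m+1}-1, \ldots, k_n-1)$ coincides with the elementary divisor distribution of a fresh random symmetric $(n-m)\times(n-m)$ matrix $Y$. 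Since $|\det X| = p^{-(n-m)} |\det Y|$ on this event, one obtains the recursion
\[
\PP_n\{|\det X| = p^{-k}\} = \sum_{m=0}^{n} \PP_n\{m_0 = m\}\,\PP_{n-m}\{|\det Y| = p^{-(k-n+m)}\}.
\]

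I would then induct on $n$, with $n=1$ exact ($\PP_1\{|x|=p^{-k}\} = (1-p^{-1})p^{-k}$). Substituting the hypothesis $\PP_l\{|\det|=p^{-j}\} = (1-p^{-l})p^{-j} + O(p^{-3j/2})$ for $l < n$ into the recursion, the leading-order term becomes $p^{-k}\sum_m \PP_n\{m_0=m\}(1-p^{-(n-m)})p^{n-m}$. The key algebraic identity, equivalent (via $\sum_m \PP_n\{m_0=m\}=1$) to $\sum_m \PP_n\{m_0=m\}p^{n-m} = 2 - p^{-n}$, is that this leading term equals $(1-p^{-n})p^{-k}$; I would verify it by direct manipulation of the explicit formula for $\PP_n\{m_0=m\}$, which reduces to a telescoping identity in $\pi_l, \beta_l$.

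After subtracting the main term, the residual $\tilde a_n(k) := \PP_n\{|\det|=p^{-k}\} - (1-p^{-n})p^{-k}$ satisfies
\[
\tilde a_n(k) = p^{-n(n+1)/2}\,\tilde a_n(k-n) + \tilde E_n(k),
\]
with $|\tilde E_n(k)|$ bounded by the inductive error times $\sum_{m=1}^{n-1}\PP_n\{m_0=m\}p^{3(n-m)/2} = \pi_n\sum_{l=1}^{n-1} p^{-l(l-2)/2}/(\beta_{n-l}\pi_l)$. The latter sum converges as $n\to\infty$, since the general term decays super-exponentially in $l$, and is therefore uniformly bounded in $n$. Unrolling the $k$-recursion then gives a geometric sum with ratio $p^{-n(n+1)/2+3n/2} = p^{-n(n-2)/2}$; combined with the trivial bound on $|\tilde a_n(k)|$ for $k < n$ and the fact that $n(n+1)/2 \ge 3$ for $n \ge 2$, this yields $|\tilde a_n(k)| = O(p^{-3k/2})$ with constant independent of $n$. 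The main obstacles are verifying the algebraic identity cleanly and keeping the constants uniform through the induction; the exponent $3/2$ arises structurally from the $m_0=0$ channel (all entries divisible by $p$), which produces the next-smallest decay rate past the dominant $p^{-k}$ scale.
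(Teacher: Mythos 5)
Your route is genuinely different from the paper's (which substitutes $\lambda_n = k-\lambda_1-\cdots-\lambda_{n-1}$ directly into the partition sum and bounds the exceptional partitions), and its skeleton is sound: the renewal recursion obtained by conditioning on $m_0$, the self-similarity of the conditional law of $(k_{m+1}-1,\ldots,k_n-1)$, and the identity $\sum_m \PP_n\{m_0=m\}\,p^{n-m}=2-p^{-n}$ are all correct. (For the identity you need not manipulate $\pi_l,\beta_l$ at all: the left side is $\EE\bigl[\#\ker(X\bmod p)\bigr]=\sum_{v\in\FF_p^n}\PP\{Xv\equiv 0\}=1+(p^n-1)p^{-n}$, since for $v\neq 0$ the vector $Xv \bmod p$ is uniform.)

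The gap is in the error propagation, and it is not cosmetic. Your inductive step gives $C_n \le \bigl(1-p^{-n(n-2)/2}\bigr)^{-1} B_n \max_{l<n} C_l$, where $C_l=\sup_j |\tilde a_l(j)|p^{3j/2}$ and $B_n=\pi_n\sum_{l} p^{-l(l-2)/2}/(\beta_{n-l}\pi_l)$. You argue that $B_n$ is uniformly bounded in $n$, but that only controls the \emph{per-step} multiplier; the induction compounds these multipliers, and the resulting constant is $\prod_{m\le n}$ of them, which is uniform in $n$ only if the multiplier is $\le 1$. It is not: even discarding the $l=1$ channel (where $\tilde a_1\equiv 0$), the $l=2$ channel alone contributes the coefficient $\pi_n/(\beta_{n-2}\pi_2)>1$ (the denominator contains the factor $1-p^{-2}$ which the numerator lacks), and dividing by $1-p^{-3/2}$ makes things worse; so $C_n$ grows geometrically in $n$ under this bookkeeping. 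Separately, for $n=2$ the ratio of your geometric sum is $p^{-n(n-2)/2}=1$, so unrolling the $k$-recursion produces an extra factor of $k$; this case is rescuable (since $\tilde a_1\equiv 0$ one finds $\tilde E_2(k)=0$ for $k\ge 1$, whence $\tilde a_2(k)=p^{-3}\tilde a_2(k-2)$ exactly), but it must be treated separately. To close the argument you need either a strictly stronger inductive hypothesis that tracks the true decay (the correction is really of relative size $p^{-\lceil k/2\rceil}$, not merely $O(p^{-k/2})$, and carries additional decay in the channels that dominate $B_n$), or a non-inductive bound as in the paper, where the whole error is controlled by the single estimate $\PP_{n-1}\{\lambda_{n-1}= k-|\lambda|\}\ll p^{-k/2}$ with a constant visibly independent of $n$.
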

\begin{proof}
Let $\lambda$ be a partition with $0\le \lambda_1 \le \ldots \le \lambda_n$ and $m_t = \#\{i:\lambda_i=t\}$. Let $|\lambda| = \lambda_1 + \ldots + \lambda_n$ be the weight of $\lambda$ and $\ell(\lambda) = n-m_0 \le n$ be its length. Then
\[
\PP_n\{|\det Q_n|_p = p^{-k}\} = \sum_{|\lambda|=k} \prod_{i=1}^{n} p^{-\lambda_i(n-i+1)} \frac{\pi_{n}}{\beta_{m_0}\cdots \beta_{m_k}\cdots} =  \sum_{|\lambda| = k} f_n(\lambda)\,,
\]
where $f_n(\lambda)$ is as defined in (\ref{fnlambda}). Express the last part $\lambda_n = k - \lambda_1 - \ldots - \lambda_{n-1}$ so that
\[
\PP_n\{|\det Q_n|_p = p^{-k}\} = 
p^{-k} (1-p^{-n}) \sum_{\lambda} f_{n-1}(\lambda) \gamma_\lambda\,,
\]
where the sum now ranges over $\lambda$ with the restriction $\lambda_{n-1} \le k - |\lambda|$. If $\lambda_{n-1} \neq k - |\lambda|$, we have $\gamma_{\lambda} = 1/\beta_1 = 1$, otherwise $\gamma_{\lambda} = \beta_{\lambda_{n-1}}/\beta_{\lambda_{n-1} + 1} \le 1/(1-p^{-2})$. Then
\[
\PP_n\{|\det Q_n|_p = p^{-k}\} = p^{-k} (1-p^{-n}) \brackets{\PP_{n-1}\{\mathcal{A}\} +O\brackets{ \PP_{n-1}\{\mathcal{B}\} }} \ll p^{-k}\,,
\]
where 
\begin{equation*}
  \mathcal{A} = \{\lambda_{n-1} < k - |\lambda|\};\quad  \mathcal{B} = \{\lambda_{n-1} = k - |\lambda|\}\,.
\end{equation*}
For the complement of $\mathcal{A}$ we have $\PP_{n-1}\{\overline{\mathcal{A}}\} = \PP_{n-1}\{\lambda_{n-1} \ge k - |\lambda|\}$, hence
\[\PP_{n-1}\{\mathcal{A}\} +O(\PP_{n-1}\{\mathcal{B}\}) = 1 + O(\PP_{n-1}\{\lambda_{n-1} \ge k - |\lambda|\})\,.\]
The condition $\lambda_{n-1} \ge k - |\lambda|$ implies $|\lambda| \ge (k + \lambda_1 + \ldots + \lambda_{n-2})/2 \ge k/2$, thus
\[
\PP_{n-1}\{\lambda_{n-1} \ge k - |\lambda|\} \le \PP_{n-1}\{|\lambda| \ge k/2\}  = \PP_{n-1}\{|\det Q_{n-1}|_p \le p^{-k/2}\}\ll p^{-k/2}\,,
\]
and $\PP_n\{|\det Q_n|_p = p^{-k}\} = p^{-k} (1-p^{-n}) (1 + O(p^{-k/2}))$.
\end{proof}


There are several definitions of rank of a matrix over a ring with zero divisors, here $R_m = \ZZ/p^m\ZZ$ when $m >1$. The standard rank of $Q$ over $R_m$ equals rank of its reduction $\overline{Q}$ mod $p$. The distribution of rank in this case is given by
\[
\PP_n\{\rk\, Q = n-r\} = p^{-r(r+1)/2} \frac{\pi_n}{\pi_{r}\beta_{n-r}}\,.
\]
One can also consider determinantal rank
\[
\text{rk}_d\, Q = \max\{j \le n: \text{there exists a non-zero minor of size $j$}\}\,.
\]
In the following corollary we derive the upper and lower bounds on the distribution of determinantal rank. We are not aware if this result has been written down explicitly, but our aim here is to provide a short straightforward proof.
\begin{cor}\label{pkrank}
Let $Q$ be a random $n \times n$ symmetric matrix with entries distributed independently and uniformly on $R_m$. Then as $m \rightarrow + \infty$,
\[
\PP_n\{\text{rk}_d\, Q \le n-1\}\ \sim\ \brackets{1-p^{-n}}p^{-m}\,,
\]
for $2 \le r\le \lceil\frac{n}{2}\rceil$,
\[
mr^{-2} p^{-m(2r-1)} \ll \PP_n\{\text{rk}_d\, Q \le n-r\}\ \ll\ m \log (\min\{m, n\})\ p^{-m(2r-1)}\,,
\]
and for $r > \lceil\frac{n}{2}\rceil$
\[
\PP_n\{\text{rk}_d\, Q \le n-r\}\ \ll\ p^{-mn}\,.
\]
All implied constants are independent of $n$, and are uniform for $p\ge 2$. 
\end{cor}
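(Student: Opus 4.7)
The plan is to translate the condition on determinantial rank into a condition on elementary divisors of a $\ZZ_p$-lift of $Q$ and then apply Corollary \ref{symel} to bound the resulting combinatorial sum. Let $\tilde Q \in S_n(\ZZ_p)$ be a Haar-distributed lift reducing to $Q$ modulo $p^m$, with elementary divisors $p^{k_1} \le \ldots \le p^{k_n}$. Since the $j$-th determinantial divisor of $\tilde Q$ equals $p^{k_1 + \ldots + k_j}$, the event $\text{rk$_d$ } Q \le n-r$ over $\ZZ/p^m\ZZ$ is equivalent to $k_1 + k_2 + \ldots + k_{n-r+1} \ge m$. Hence, by Corollary~\ref{symel},
\[
\PP\{\text{rk$_d$ } Q \le n-r\} = \sum_\lambda \frac{\pi_n}{\prod_t \beta_{m_t(\lambda)}} \prod_{j=1}^{n} p^{-k_j(n-j+1)},
\]
where the sum is over partitions $\lambda = (k_1 \le \ldots \le k_n)$ satisfying the above inequality and the $\beta$-factors lie in a uniformly bounded range.

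For $r=1$ this is just $\PP\{|\det Q|_p \le p^{-m}\}$, and the asymptotic follows by summing the previous corollary on $|\det X_n|$ over $k \ge m$. For $2 \le r \le \lceil n/2 \rceil$, I would condition on $s := n - m_0$, the number of strictly positive $k_j$'s, and set $a_i := k_{n-s+i}$. The exponent becomes $\sum_{i=1}^s a_i(s-i+1)$ (independent of $n$), and the constraint becomes $a_1 + \ldots + a_{s-r+1} \ge m$. A Lagrange/boundary analysis on the hyperplane $a_1 + \ldots + a_{s-r+1} = m$ (together with the forced equality $a_{s-r+1} = \ldots = a_s$ at the minimizer imposed by the ordering) shows that the minimum value of the exponent is exactly $(2r-1)m$, attained at a zero-dimensional set for $s = 2r-2$ and along a one-parameter family of approximate length $m/r$ for $s = 2r-1$. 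Carrying out the residual geometric sums in the perpendicular directions yields a contribution of order $\frac{m}{r(r-1)} \cdot p^{-m(2r-1)}$ from the critical family alone, giving the lower bound. For the upper bound, I would bound the contribution of each admissible $s$ separately: $s \in \{2r-2, 2r-1\}$ contribute $\asymp m \cdot p^{-m(2r-1)}$, while larger $s$ contribute a geometrically smaller amount per value, and the total sum over $s$ up to $\min(n, O(m))$ produces the stated logarithmic factor.

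Finally, for $r > \lceil n/2 \rceil$ one has $n-r+1 \le \lfloor n/2 \rfloor$, and a direct minimization of $\sum_j k_j(n-j+1)$ subject to $k_1 + \ldots + k_{n-r+1} \ge m$ shows that the balanced configuration $k_1 = \ldots = k_n = m/(n-r+1)$ attains the minimum $\frac{mn(n+1)}{2(n-r+1)} \ge m(n+1)$, so $\PP \ll p^{-m(n+1)} \ll p^{-mn}$. The main obstacle will be the combinatorial optimization in the middle case: isolating the critical one-parameter family to obtain the exact exponent $(2r-1)m$ and the factor $m$, and extracting the logarithmic factor in the upper bound through a careful sum over all admissible $s$, as well as verifying that uniformity of constants in $p \ge 2$ survives the estimates on the $\beta$-factors.
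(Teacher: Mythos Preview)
Your reduction to $k_1+\cdots+k_{n-r+1}\ge m$ and the appeal to Corollary~\ref{symel} is exactly how the paper starts. The subsequent decomposition, however, is different. You condition on $s=n-m_0$ and optimise the exponent $\sum_i a_i(s-i+1)$ over the ordered simplex; the paper instead conditions on $t=k_{n-r+1}$, which splits the weight multiplicatively into a $j$-dimensional and an $(n-j)$-dimensional piece (with $j=n-r+1$), yielding
\[
\PP_n\{k_1+\cdots+k_j=l\}\ \asymp\ p^{-l(n-j)}\sum_t \PP_j\big\{|\det X|=p^{-l},\ k_j=t\big\}\,\PP_{n-j}\{k_i\ge t\ \forall i\}.
\]
This reduces everything to a one-variable sum in $t$: the exponent of $p$ is piecewise linear in $t$ with minimum $l(2r-1)$ attained on the integer interval $t\in[l/r,\,l/(r-1)]$, whose length $l/(r(r-1))$ gives the lower bound directly, while the extra factor $l/t$ in the upper bound, summed over $t\in[l/j,l]$, is exactly what produces the $\log(l\wedge j)$.

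Your polytope optimisation correctly locates the critical exponent $(2r-1)m$ and the minimising face at $s\in\{2r-2,2r-1\}$. But the logarithm in the upper bound does \emph{not} arise from summing over $s$: after shifting $a_i\mapsto 1+b_i$ one finds that for $s=2r-1+k$ the minimal exponent is $(2r-1)m+\tfrac{k(k+1)}{2}$, so the tail in $s$ is super-geometric and contributes only a bounded constant. If your argument is carried through carefully it should actually give $\ll m\,p^{-m(2r-1)}$ with no logarithm, which is slightly sharper than the bound the paper states; the paper's $\log(m\wedge n)$ is an artifact of its particular one-dimensional slicing. Your treatment of $r>\lceil n/2\rceil$ via the balanced vertex is also correct: on the ordered simplex $\{0\le k_1\le\cdots\le k_{j^*},\ \sum k_i=m\}$ the effective linear objective is minimised at the all-equal vertex precisely when $j^*\le r-1$, which holds throughout this range, giving $\tfrac{mn(n+1)}{2j^*}\ge m(n+1)$.
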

\begin{proof}
Let $Q_n \in S_n(\ZZ_p)$ be a random matrix defined as usual. Its reduction is distributed uniformly on $S_n(R_m)$. Determinantal rank is invariant under conjugation, so the probability that rank mod $p^m$ is at most $j-1$ equals $\PP_n\{k_1 + \ldots + k_j \ge m\}$. For $j =n$ the statement is follows from Corollary \ref{easydet}. For $j < n$ set $r := n-r+1\ge 2$, then for $\mathbf{k}_j = (k_1,\ldots,k_j)$ 
\begin{align*}
\PP_n\{|\mathbf{k}_j| = l\} &= \sum_{|\mathbf{k}_j| = l} \prod_{i\le n}\frac{1}{p^{k_i(n-i+1)}} \frac{\pi_n}{\beta_{m_0}\cdots \beta_{m_k}\cdots} \asymp \\
&\asymp \sum_{|\mathbf{k}_j|= l} \prod_{i\le j}\frac{1}{p^{k_i(j-i+1)}} \frac{1}{p^{l(r-1)}} \frac{\pi_j}{\beta_{m_0} \cdots \beta_{m_{k_j}}} \prod_{i > j}\frac{1}{p^{k_i(r-(i-j))}} \frac{\pi_{r-1}}{\beta_{m_{k_{j+1}}}\cdots} \asymp \\
 &\asymp \frac{1}{p^{l(r-1)}} \sum_{\frac{l}{j} \le t\le l} \PP_j\{|\det Q_j|_p = p^{-l},\ k_j = t\} \PP_{r-1}\{k_1 \ge t\}\,.
\end{align*}
Here we used that $0 <\prod_{i=1}^{\infty} (1-2^{-i}) \le \pi_N(q) \le 1$ for all $N \in \NN$ and $q \ge 2$.
Conditioning on $s=m_t$,
\begin{align*}
\PP_j\{|\det Q_j|_p = p^{-l},\ k_j = t\}  
\ll \sum_{1 \le s \le \frac{l}{t}} \PP_{j-s}\{|\det Q_{j-s}|_p = p^{-(l-ts)}, k_{j-s} < t\} p^{-s(l-ts) - \frac{ts(s+1)}{2}}\,.
\end{align*}
The largest term is at $s = \lfloor \frac{l}{t}\rfloor$ and is $\asymp p^{-A}$ with $A = \frac{t}{2}\lfloor\frac{l}{t}\rfloor(\lfloor\frac{l}{t}\rfloor + 1) +(l - t\lfloor\frac{l}{t}\rfloor)(\lfloor\frac{l}{t}\rfloor + 1)$, so
\[
p^{-A}  \ll \PP_j\{|\det Q_j|_p = p^{-l},\ k_j = t\} \ll lt^{-1} p^{-A} \,.
\]
With a bit more work one can actually obtain a better upper bound matching the lower bound in order of magnitude. Together with $\PP_{r-1} \{k_1 \ge t\} = p^{-tr(r-1)/2}$ we have
\begin{align*}
\PP_n\{|\mathbf{k}_j| = l\} \gg \frac{1}{p^{l(n-j)}} \sum_{\frac{l}{j} \le t \le l} p^{-A-\frac{t}{2}r(r-1)}\,.
\end{align*}
The exponent of $p$ is a piecewise continuous function in $t$, and as a function in integer $t$ it attains its minimum $lr$ on all $t \in \big[\frac{l}{r};\frac{l}{r-1}\big]$. If $r \le \lceil \frac{n}{2}\rceil$, this interval is contained inside the range, and for fixed $r$
\[
\PP_n\{|\mathbf{k}_j| = l\} \gg lr^{-2} p^{-l(2r-1)}.
\]
Similarly the upper bound satisfies
\[
\PP_n\{|\mathbf{k}_j| = l\} \ll p^{-l(2r-1)} \sum_{\frac{l}{j} \le t \le l} l/t \ll p^{-l(2r-1)} l \log (\min\{l, j\})\,.
\]
If $r > \lceil \frac{n}{2}\rceil$, the minimum is attained at $t = \lceil \frac{l}{n-r+1}\rceil > \frac{l}{r-1}$, but the upper bound follows trivially. Summing over $l \ge m$ concludes the proof.
\end{proof}


\subsection{Local-global problems}

Let $\mathcal{A}$ be a subset of $\ZZ^N$ such that it has density in the sense that
\[
\gamma (\mathcal{A}) := \lim_{H \rightarrow \infty} \frac{\#\{\mathbf{x} = (x_1,\ldots,x_N) \in \ZZ^N: 1\le x_i\le H,\ \mathbf{x} \in \mathcal{A}\}}{H^{N}}
\]
exists. We say that the probabilistic local-global principle holds if
$$
\gamma (\mathcal{A}) = \prod_{\text{$p$ prime}} \gamma_p(\mathcal{A})\,,
$$
where $\gamma_{\mathcal{A}}(p)$ is the local density of $\mathcal{A}$ over $\ZZ_p$, and the product may include $p=+\infty$ corresponding to the Euclidean contribution.

 There is a lot of ongoing work on determining when the local-global principle holds in density; for some general conditions see \cite{ekedahl1991infinite}, \cite{poonen1999cassels}. The set of integer $n \times n$ symmetric matrices is essentially $\ZZ^{n(n+1)/2}$, so we can use the local-global principle to compute the density of some interesting sets of integer symmetric matrices. To do that over all $\ZZ_p$, we will assume that Corollary \ref{symel} holds for $p=2$ (the proof would go similarly to the proof for odd primes, but we do not wish to deal with the canonical form of quadratic forms over $\ZZ_2$).

Let us first define the elementary divisors of an integer matrix. Let $X \in \M_n(\ZZ)$, then there exist $U,V \in \GL_n(\ZZ)$ such that $X = U \Sigma V$, where $\Sigma = \diag(a_1,\ldots,a_n)$ with $a_i \ge 0$ and $a_1 | a_2|\cdots |a_n$. The integers $a_i$ defined uniquely are the elementary divisors of $X$. If we look at $X$ as an element of $\M_n(\ZZ_p)$, then $\text{val}_p(a_i) = k_i$ as in the definition of the corresponding decomposition over $\ZZ_p$.

\begin{cor}\label{cycprob}
As $H \to \infty$ the proportion of integer $n\times n$ symmetric matrices with entries between $1$ and $H$ having its first $n-1$ elementary divisors equal to one tends to
\[
\prod_p \brackets{\frac{\pi_n}{\beta_n} + p^{-1} \frac{\pi_n}{\beta_{n-1}\pi_1}}\,.
\]
As $n \rightarrow \infty$ this expression tends to
\[
\prod_{i=1}^{\infty} \zeta(2i+1)^{-1} = (\zeta(3)\zeta(5)\cdots\zeta(2k+1)\cdots)^{-1} \approx 0.7935\,.
\]
\end{cor}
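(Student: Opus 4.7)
The plan is to translate the integral condition into a product of local conditions, invoke the local-global principle, and then compute each local factor explicitly from Corollary \ref{symel}. The condition that the first $n-1$ elementary divisors of $X \in S_n(\ZZ)$ equal one is equivalent to the $(n-1)$-th determinantal divisor (the gcd of all $(n-1)\times(n-1)$ minors) being equal to one, which in turn is the condition that for every prime $p$, the reduction of $X$ mod $p$ has $\FF_p$-rank at least $n-1$. Equivalently, the local condition at $p$ is that the $p$-adic elementary divisors satisfy $k_1 = \ldots = k_{n-1} = 0$, with $k_n \ge 0$ arbitrary. Assuming (as stated in the previous subsection) the probabilistic local-global principle for this set, together with the $\mathcal{O}(H^{-1})$ error from counting lattice points in a box of side $H$, the density equals $\prod_p \alpha_n(p) + \mathcal{O}(H^{-1})$, where $\alpha_n(p)$ is the local density at $p$.

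The local density $\alpha_n(p)$ is then computed by summing Cor.~\ref{symel} over all admissible $\Sigma$. There are only two shapes to consider: either $m_0 = n$ (so $\Sigma = \mathbf{1}$, contributing $\pi_n/\beta_n$), or $m_0 = n-1$ with $m_{k_n} = 1$ for some $k_n \ge 1$ (contributing $\frac{\pi_n}{\beta_{n-1}\beta_1} p^{-k_n}$). Since $\beta_1 = 1$, summing the geometric series $\sum_{k\ge 1} p^{-k} = p^{-1}/(1-p^{-1}) = p^{-1}/\pi_1$ yields the second term $p^{-1}\pi_n/(\beta_{n-1}\pi_1)$. This gives the finite-$n$ formula stated in the corollary.

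For the limit as $n \to \infty$, both $\pi_n$ and $\beta_n, \beta_{n-1}$ converge to the Euler-type products $\pi_\infty = \prod_{i\ge 1}(1-p^{-i})$ and $\beta_\infty = \prod_{i \ge 1}(1-p^{-2i})$, so the local factor tends to $\frac{\pi_\infty}{\beta_\infty}\bigl(1 + \frac{p^{-1}}{1-p^{-1}}\bigr) = \frac{\pi_\infty}{\beta_\infty \pi_1}$. Writing $\pi_\infty/\pi_1 = \prod_{i\ge 2}(1-p^{-i})$ and splitting the index set into even and odd parts, one sees that the even part cancels $\beta_\infty$, leaving $\prod_{j\ge 1}(1-p^{-(2j+1)})$. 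Taking the Euler product over primes recovers $\prod_{i\ge 2} \zeta(2i-1)^{-1}$, and uniform convergence of this absolutely convergent double product (dominated by $\sum_p \sum_{j\ge 1} p^{-(2j+1)}$) justifies exchanging the limit with the product over $p$.

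The main obstacle is the justification of the local-global principle for this particular set, which must be done along the lines indicated in the excerpt (invoking e.g.\ \cite{ekedahl1991infinite} or \cite{poonen1999cassels}); once this is granted, the uniformity of the error $\mathcal{O}(H^{-1})$ in $n$ requires controlling the tail of the local densities at large primes (easy, since $\alpha_n(p) = 1 + \mathcal{O}(p^{-3})$), and the interchange of limits in $n$ and $H$ is then routine.
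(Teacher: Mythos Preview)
Your proposal is correct and follows essentially the same approach as the paper: invoke the local-global principle, compute the local factor from Corollary~\ref{symel} by summing over $k_n\ge 0$, and then pass to the limit $n\to\infty$ via $\pi_\infty/(\beta_\infty\pi_1)=\prod_{j\ge 1}(1-p^{-(2j+1)})$. The only difference is in the justification of the sieve step: the paper appeals directly to Ekedahl~\cite{ekedahl1991infinite} for the local-global principle (noting this is the symmetric analogue of Ekedahl's own application) and to Bhargava~\cite{bhargava2014geometric} for the $\mathcal{O}(H^{-1})$ error, whereas you leave this as a black box and attribute the error to lattice-point counting.
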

\begin{proof}
The set in question is 
\begin{equation*}
    V(\ZZ) := \{Q \in S_n(\ZZ): a_1=\ldots=a_{n-1} = 1\} = \{Q \in S_n(\ZZ): a_{n-1} = 1\}\,
\end{equation*}
since $a_j|a_{n-1}$ for all $j \le n-1$. Then its complement 
\begin{equation*}
    \overline{V}(\ZZ) = \{Q \in S_n(\ZZ): a_{n-1} > 1\}
\end{equation*}
 is a closed subscheme of $\mathbb{A}_{\ZZ}^{n(n-1)/2}$ of codimension $2$. We can use the local-global principle for this problem due to Ekedahl \cite{ekedahl1991infinite}. Moreover, this is the symmetric matrix version of the application Ekedahl gives in \cite{ekedahl1991infinite}. Over $\ZZ_p$ we have
\[
\gamma_p(V) = \PP\{k_1=\ldots=k_{n-1}=0\} =  \frac{\pi_n}{\beta_n} + p^{-1} \frac{\pi_n}{\beta_{n-1}\pi_1}= \begin{cases} \frac{\pi_n}{\beta_n} \brackets{1 - \frac{1}{p}}^{-1}\,, & \text{if $n$ is odd}\,;\\
\frac{\pi_n}{\beta_n} \brackets{(1 - \frac{1}{p})^{-1} - \frac{1}{p^{n}}}\,, & \text{if $n$ is even}\,.
\end{cases}
\]
As $n$ grows,
$
\prod_p \gamma_p(V) \to \prod_p \frac{\pi_{\infty}}{\beta_{\infty}\pi_1} =  \prod_{i=1}^{\infty} \zeta(2i+1)^{-1}
$.
\end{proof}
The condition $k_1=\cdots=k_{n-1}=0$ is equivalent to $\cok Q$ being cyclic. The asymptotic in Corollary \ref{cycprob} is known to be the upper bound on the probability to have a cyclic cokernel in case of a general distribution of the entries, see the work of Matchett Wood \cite{wood2017distribution}. Moreover, in the case of general matrices Nguyen and Matchett Wood \cite{nguyen2018random} proved that the limit as $n \to \infty$ is independent of the distribution of entries. It is reasonable to assume that this should also hold for symmetric matrices.

 We also present the following new corollary on the density of integer symmetric matrices with the property that there is no prime $p$ such that its square divides every minor of fixed size.

\begin{cor}
Let $r,n \in \NN$ with $1\le r\le n-1$. Then as $H \to \infty$ the proportion of integer $n\times n$ symmetric matrices with entries between $1$ and $H$ such that the greatest common divisor of its minors of size $n-r$ is square-free tends to
\[
\prod_p \brackets{\sum_{t=0}^{r+1} \frac{\pi_n}{\beta_{n-t}\pi_t} p^{-\frac{t(t+1)}{2}} - \frac{\pi_n}{\beta_{n-r-1}\pi_{r+1}} p^{-(r+1)(r+2)}} > 0\,.
\]
\end{cor}
\begin{proof}
A square-free number is not divisible by $p^2$ for any prime $p$. Let $V(\ZZ) \subseteq S_n(\ZZ)$ be the set of matrices with a square-free $(n-r)\times(n-r)$ minor. All minors are polynomial in the entries of the matrix, and the complement of $V$ is a closed subscheme of $\mathbb{A}_{\ZZ}^{n(n+1)/2}$ of codimension $(r+1)(r+2)/2\ge 3$. Let
\[
N_{\le H}(n,r) = \#\{Q \in S_n(\ZZ): 1\le q_{ij}\le H,\ \text{$Q$ has a square-free minor of size $n-r$}\}\,.
\]
Applying Ekedahl's sieve \cite{ekedahl1991infinite} gives
\[
\lim_{H\to \infty} \frac{N_{\le H}(n,r)}{H^{n(n+1)/2}} = \prod_p \gamma_p(V)\,,
\]
where $\gamma_n(p)$ is the measure of the set
\[
V_p = \{Q \in S_n(\ZZ_p):  k_1+\ldots+k_{n-r} \le 1\}\,.
\]
The complement of $V_p$ in $S_n(\ZZ_p)$ is 
\[
\overline{V}_p = 
\{Q \in S_n(\ZZ_p): \text{rk}_d\, Q \le n-r-1 \text{ in $\ZZ/p^2\ZZ$}\}\,.
\]
There are two options over each $\ZZ_p$. Either $k_1 = \cdots = k_{n-r} = 0$ with probability
\[
a_p(V) = \sum_{t=0}^r \frac{\pi_n}{\beta_{n-t}\pi_t} p^{-\frac{t(t+1)}{2}}\,, 
\]
or $k_1=\cdots=k_{n-r-1} = 0$ and $k_{n-r}=1$ with probability
\[
b_p(V) = \frac{\pi_n}{\beta_{n-r-1}\pi_{r+1}} p^{-\frac{(r+1)(r+2)}{2}}(1 - p^{-\frac{(r+1)(r+2)}{2}})\,,
\]
so $\gamma_p(V) = a_p(V) + b_p(V)$. We have $\mu(\overline{V}_p) \ll p^{-(r+1)^2/2} \ll p^{-2}$, thus the product over primes converges to a positive number. Moreover, if $r$ is fixed, or grows as $r = r(n)$, then as $n \to \infty$
\[
\prod_p \gamma_p(V) \to \prod_p \frac{\pi_{\infty}}{\beta_{\infty}}  \brackets{\sum_{t=0}^{r+1} \frac{p^{-\frac{t(t+1)}{2}}}{\pi_t} -
\frac{1}{\pi_{r+1}} p^{-(r+1)(r+2)}}\,.
\]
In particular, if $r = 1$,
\[
\prod_p \gamma_p(V) \to \frac{\prod_p ( 1 + p^{-3}(1+1/p)^{-1} + p^{-4})}{\zeta(3)\zeta(5) \cdots \zeta(2k+1)\cdots} \approx 0.9581\,,
\]
and if $r = r(n) \to \infty$ as $n \to \infty$, then $\prod_p \gamma_p(V) \to 1$.
\end{proof}

\subsection{Isotropy problem}

In this subsection we compute the probability that a random quadratic form with independent coefficients distributed according to the Haar measure on $\ZZ_p$ is isotropic. A quadratic form $Q$ over a ring $R$ in $n$ variables is isotropic if there exists $\mathbf{t} \in R^n \setminus \{0\}$ such that $Q(t_1, \ldots, t_n) = 0$. Otherwise call $Q$ anisotropic.

It has been long known that a quadratic form over $\ZZ_p$ in at least $5$ variables always has a non-trivial zero, so we only consider $n \le 4$. Isotropy is invariant under conjugation by $\GL_n(\QQ_p)$, thus to determine if a quadratic form is isotropic it suffices to look at its invariants. In particular, if $p$ is an odd prime and $\varepsilon = \chi(-1)$, then
\begin{enumerate}[(a)]
    \item a binary quadratic form $Q$ over $\QQ_p$ is isotropic iff $d(Q) = \varepsilon$;
    \item a ternary quadratic form $Q$ over $\QQ_p$ is isotropic iff $c(Q) = \langle -1, d(Q)\rangle$;
    \item a quaternary quadratic form $Q$ over $\QQ_p$ is anisotropic iff $d(Q) =1$ and $c(Q) = -1$.
\end{enumerate}
For more details see Cassels' book \cite{cassels2008rational}.

The formulae for the probability of isotropy for all $n \le 4$ have been obtained by Bhargava, Cremona, Fisher, Jones, and Keating \cite{bhargava2016probability}. Here we aim to do two things: first, to give a very short proof using the distribution of $\QQ_p$-classes, and second, to reconstruct the recursive procedure provided in \cite{bhargava2016probability} from the distribution of $\ZZ_p$-classes and to illustrate that it can in fact be seen as the ``local'' GOE computation.

\begin{cor}
Let $q_n$ be the probability of isotropy of a random quadratic form in $n$ variables with coefficients distributed independently and uniformly on $\ZZ_p$. Set $t = 1/p$, then
\begin{equation*}
    \begin{aligned}
   & q_2 = \frac{1}{2}\,;\\
    & q_3 = 1 - \frac{t}{2(1+ t)^2}\,;\\
    & q_4 = 1 - \frac{t^3(1-t)}{4(1+t)^2(1-t^5)}\,.
    \end{aligned}
\end{equation*}
\end{cor}
\begin{proof}
By Lemma \ref{sigmalem} and \ref{deltalem}, and Theorem \ref{qfclass}
\begin{align*}
            q_2& = \sigma_2(\varepsilon) = \frac{(1 + \varepsilon^2t)(1 - \varepsilon^2 t^3)}{2(1 + t)(1-t^3)} = \frac{1}{2}\,;\\
             q_3& = \rho_3(1,1) + \rho_3(r,1) + \rho_3(p,\varepsilon) + \rho_3(pr,\varepsilon) = 1 - \frac{t}{2 (1 + t)^2}\,;\\
             q_4 &= 1 - \rho_4(1,-1) = 1 - \frac{t^3(1 - t)}{4(1+t)^2(1-t^5)}\,.
\end{align*}

One can avoid using the distribution of classes altogether by analysing which combinations of signs and parities of elementary divisors give the right class, and directly computing several geometric sums using Theorem \ref{symdensity}. For a quadratic form $Q$ from the equivalence class defined by $\Sigma$ and $S$ we have $d(Q) = \prod_i p^{k_i} S_{ii}$ and $c(Q) = \prod_{i < j} \left<p^{k_i} S_{ii}, p^{k_j} S_{jj}\right>$. Reduce the number of cases by only considering primitive quadratic forms, i.e. with $k_1 = 0$. For $n = 2$ we have $\text{val}_p(\det Q)$ even and $\prod s_i = \varepsilon$, so
\[
q_2 = \frac{1}{1 - t^3}\bigg(\frac{\pi_2}{\alpha_{2}^{\varepsilon}} + \sum_{\substack{j > 0 \\ j \text{ even}}} t^{j} \frac{\pi_2}{\alpha_{1}^{s} \alpha_{1}^{\varepsilon s}}\bigg) = \frac{1}{2}\,.
\]
For $n = 3$, $Q$ is anisotropic if either $\text{val}_p(\det Q)$ is even and $c(Q) = -1$, or $\text{val}_p(\det Q)$ is odd and $c(Q) = -\varepsilon$, so
\begin{align*}
1 - q_3 =\frac{1}{1 - t^6} \bigg( \frac{\pi_3}{\alpha_2^{-\varepsilon}\beta_1} \sum_{i \text{ odd}} t^{i}+
\frac{\pi_3}{\beta_1 \alpha_2^{-\varepsilon}} \sum_{i \text{ odd}} t^{3i}
+\frac{\pi_3}{\beta_1} \sum_{\substack{i < j \\ i,j \text{ odd}}}  t^{2i+j} \frac{1}{\alpha_1^{s} \alpha_1^{-s}} \,+ \\ + \frac{\pi_3}{\beta_1} \sum_{\substack{0< i < j \\ i+j \text{ odd}}}  t^{2i+j} \frac{1}{\alpha_1^{s} \alpha_1^{-s\varepsilon}}\bigg) = \frac{t}{2(1 + t)^2}\,.
\end{align*}
We omit the computation for $q_4$, which is more bulky with $7$ geometric sums over triple indices.
\end{proof}

Yet another option is to use the recurrence equation (\ref{recurr}) to reproduce the recursive computation from the work of Bhargava, Cremona, Fisher, Jones, and Keating \cite{bhargava2016probability}. Let $\mathcal{I}_n$ denote the set of ordered pairs $(a,b)$ such that a quadratic form $Q$ in $n$ variables with $d(Q) = a$ and $c(Q) = b$ is isotropic. Let $u \in\{1,r\}$ and $j \in \NN^0$, and let $T_j^u$ denote the operator acting on such pairs as
\[
T_j^u (a,b) = (a^\prime(j,u), b^\prime(j,u)) = (aup^j,b \varepsilon^{j(j-1)/2}\left<a,u\right>\left<p,au\right>^{j-1})\,.
\]
This corresponds to the change in the discriminant and the Hasse invariant of the rest of the quadratic form under rescaling by $p$. Note that $T_n^1T_n^1 = id$, and $T_n^1 \mathcal{I}_n = \mathcal{I}_n$.

Condition on the multiplicity $m_0$ and use the same observation as in \cite{bhargava2016probability} that $Q$ can only be anisotropic if its reduction $\overline{Q}$  mod $p$ doesn't have a non-trivial zero over $\FF_p$. This happens in two cases: if $m_0  = \rk\, \overline{Q} = 1$, or if $m_0 = 2$ and $s_0 = -\varepsilon$, i.e. its reduction mod $p$ is irreducible over $\FF_p$. If $m_0 \ge 3$ by the Chevalley-Warning theorem $\overline{Q}$ is always isotropic over $\FF_p$. A non-trivial zero over $\FF_p$ can be lifted to $\ZZ_p$ via Hensel's lemma. Set $t=1/p$, and write $\PP_n\{\mathcal{I}_n\}$ for the probability that a quadratic form in $n$ variables is isotropic, then
\begin{align*}
\PP_n\{\mathcal{I}_n\} =  
t^{\frac{(n-1)n}{2}} \frac{\pi_n}{\alpha_1^+ \pi_{n-1}} \PP_{n-1}\{T_{n-1}^{1} \mathcal{I}_n \} + 
t^{\frac{(n-1)n}{2}} \frac{\pi_n}{\alpha_1^- \pi_{n-1}} \PP_{n-1}\{T_{n-1}^{r} \mathcal{I}_n \}\, \\
t^{\frac{(n-2)(n-1)}{2}} \frac{\pi_n}{\alpha_2^{-\varepsilon} \pi_{n-2}} \PP_{n-2}\{T_{n-2}^{r\varepsilon} \mathcal{I}_n\} + t^{\frac{n(n+1)}{2}} \PP_n\{\mathcal{I}_n\} + \\
\PP_n\{m_0\ge 3 \, \cup\, (m_0=2,s_0=-\varepsilon)\}\,.
\end{align*}
Comparing to the notation of Bhargava, Cremona, Fisher, Jones, Keating \cite{bhargava2016probability} we recognise
\begin{align*}
   & \xi_1^{(n)} = t^{\frac{(n-1)(n-2)}{2}} \frac{\pi_n}{\alpha_2^{-\varepsilon} \pi_{n-2}} = t^{\frac{(n-2)(n-1)}{2}} \frac{(1 - t^{n-1})(1 - t^n)}{2(1 + t)} = \PP_n\{m_0=2, s_0 = -\varepsilon\}\,;\\
   & \xi_2^{(n)} = t^{\frac{n(n-1)}{2}} \frac{\pi_n}{\pi_{n-1} \beta_1} =  t^{\frac{(n-1)n}{2}} \brackets{1 - t^{n}} = \PP_n\{m_0=1\}\,;\\
    & \xi_0^{(n)} = 1 - t^{\frac{n(n+1)}{2}} - \xi_1^{(n)} - \xi_2^{(n)} = 
    \PP_n\{m_0\ge 3 \, \cup\, (m_0=2,s_0=-\varepsilon)\}\,;\\
   & \alpha_1^{(n)} = \PP_{n-2}\{T_{n-2}^{r\varepsilon} \mathcal{I}_n\} = \PP_n\{\text{$Q$ is isotropic given $m_0=2,\, s_0 = -\varepsilon$}\}\,;\\
  &  \alpha_2^{(n)} = \frac{1}{2} \brackets{ \PP_{n-1}\{T_{n-2}^{1} \mathcal{I}_n\} + \PP_{n-1}\{T_{n-2}^{r} \mathcal{I}_n\}} = \PP_n\{\text{$Q$ is isotropic given $m_0=1$}\}\,,
\end{align*}
where $\xi_i^{(n)}$ and $\alpha_i^{(n)}$ are as in \cite{bhargava2016probability}. Further conditioning in their recursive procedure similarly corresponds to conditioning on the multiplicity $m_1 = \#\{i: k_i = 1\}$ and signature $s_1$ of the initial quadratic form in $n$ variables in respective cases. For $\alpha_1^{(n)}$ we have
\begin{align*}
   \PP_{n-2}\{ T_{n-2}^{r\varepsilon} \mathcal{I}_n\} = \PP_{n-3}\{T_{n-3}^1T_{n-2}^{r\varepsilon} \mathcal{I}_n\} \frac{\pi_{n-2}}{\alpha_1^+\pi_{n-3}} t^{\frac{(n-3)(n-2)}{2}} + 
   \PP_{n-3}\{T_{n-3}^r T_{n-2}^{r\varepsilon} \mathcal{I}_n\} \frac{\pi_{n-2}}{\alpha_1^-\pi_{n-3}} t^{\frac{(n-3)(n-2)}{2}}+\\
   \PP_{n-4}\{T_{n-4}^{r\varepsilon} T_{n-2}^{r\varepsilon} \mathcal{I}_n\} \frac{\pi_{n-2}}{\alpha_2^{-\varepsilon}\pi_{n-4}}t^{\frac{(n-4)(n-3)}{2}} + t^{\frac{(n-1)(n-2)}{2}} \PP_{n-2}\{T_{n-2}^1 T_{n-2}^{r\varepsilon} \mathcal{I}_n\} =  \alpha_1^{(n)}\,.
\end{align*}
The last term has subsequent conditioning on $m_2$. By the Chevalley-Warning theorem $m_2 >0$ implies isotropy since solving $a_1x_1^2 + a_2x_2^2+ a_3(px_3)^2 = 0$ is the same as $a_1x_1^2 + a_2x_2^2+a_3x_3^2=0$. Thus
\begin{equation*}
    \PP_{n-2}\{T_{n-2}^1 T_{n-2}^{r\varepsilon}\mathcal{I}_n\} = (1-t^{\frac{(n-1)(n-2)}{2}}) + t^{\frac{(n-1)(n-2)}{2}} \PP_{n-2}\{T_{n-2}^1 T_{n-2}^1 T_{n-2}^{r\varepsilon}\mathcal{I}_n\}\,.
\end{equation*}
Here $\PP_{n-2}\{T_{n-2}^1 T_{n-2}^1 T_{n-2}^{r\varepsilon} \mathcal{I}_n\} = \PP_{n-2}\{T_{n-2}^{r\varepsilon} \mathcal{I}_n\} = \alpha_1^{(n)}$. The corresponding notation in \cite{bhargava2016probability} is
\begin{align*}
     \beta_1^{(n)} &= \PP_{n-4}\{T_{n-4}^{r\varepsilon} T_{n-2}^{r\varepsilon} \mathcal{I}_n\} = \PP_n\{\text{$Q$ is isotropic given}\ m_0=m_1=2,\, s_0= s_1 =-\varepsilon\}\,;\\
     \beta_2^{(n)} &= \frac{1}{2}\sum_{u\in\{1,r\}}\PP_{n-3}\{T_{n-3}^u T_{n-2}^{r\varepsilon} \mathcal{I}_n\} = \PP_n\{\text{$Q$ is isotropic given}\ m_0=2,\, s_0= -\varepsilon,\, m_1=1\}\,;\\
    \nu_1^{(n)}& = t^{\frac{(n-1)(n-2)}{2}} = \PP_n\{\text{$m_2=0 $ given}\ m_0=2,\, s_0= -\varepsilon,\, m_1=0\}\,;\\
     \nu_2^{(n)} &= 0\,;\\
    \nu_0^{(n)} &= 1-t^{\frac{(n-1)(n-2)}{2}}\, = \PP_n\{\text{$m_2\ge 1$ given}\ m_0=2,\, s_0= -\varepsilon,\, m_1=0\}\,.
\end{align*}
For $u \in\{1,r\}$ we have
\begin{align*}
        \PP_{n-1}\{T_{n-1}^{u} \mathcal{I}_n \} = t^{\frac{n(n-1)}{2}} \PP_{n-1}\{T_{n-1}^1 T_{n-1}^{u} \mathcal{I}_n\} + 
        \PP_{n-2}\{T_{n-2}^1 T_{n-1}^{u} \mathcal{I}_n \} \frac{\pi_{n-1}}{\alpha_1^+\pi_{n-2}}t^{\frac{(n-2)(n-1)}{2}} + \\
        \PP_{n-2}\{T_{n-2}^r T_{n-1}^{u} \mathcal{I}_n \} \frac{\pi_{n-1}}{\alpha_1^-\pi_{n-2}}t^{\frac{(n-2)(n-1)}{2}} + \PP_{n-3}\{T_{n-3}^{r\varepsilon}T_{n-1}^{u} \mathcal{I}_n\}\frac{\pi_{n-1}}{\alpha_2^{-\varepsilon}\pi_{n-3}} t^{\frac{(n-3)(n-2)}{2}}\,.
\end{align*}
The term $\PP_{n-1}\{T_{n-1}^1 T_{n-1}^{u} \mathcal{I}_n\}$ needs further conditioning on $m_2$. Similarly to the previous case, if $m_2 \ge 2$ or if $m_2=1$ and $s_0s_2=\varepsilon$, a quadratic form will be isotropic, hence
\begin{align*}
    \PP_{n-1}\{T_{n-1}^1 T_{n-1}^{u} \mathcal{I}_n\} = t^{\frac{(n-1)(n-2)}{2}} \frac{\pi_{n-1}}{\alpha_1^{-\varepsilon s_0}\pi_{n-2}} \PP_{n-2}\{T_{n-2}^{r\varepsilon u}T_{n-1}^1 T_{n-1}^{u} \mathcal{I}_n\} \,+ \\
    t^{\frac{n(n-1)}{2}} \PP_{n-1}\{T_{n-1}^{u} \mathcal{I}_n\} + 
    1 - t^{\frac{n(n-1)}{2}} - t^{\frac{(n-1)(n-2)}{2}} \frac{\pi_{n-1}}{\alpha_1^{-\varepsilon s_0}\pi_{n-2}}\,.
\end{align*}
Further, one can check that $T_{n-2}^{r\varepsilon u}T_{n-1}^1 T_{n-1}^{u} = T_{n-2}^{r\varepsilon}$ by the definition of $T_j^u$. The corresponding notation here is
\begin{align*}
        \gamma_1^{(n)} &= \frac{1}{2}\sum_{u\in\{1,r\}} \PP_{n-3}\{T_{n-3}^{r\varepsilon}T_{n-1}^{u} \mathcal{I}_n\}  = \PP_n\{\text{$Q$ is isotropic given}\ m_0=1,\, m_1=2,\, s_1 =-\varepsilon\}\,;\\
        \gamma_2^{(n)}& = \frac{1}{4} \sum_{u,v \in\{1,r\}} \PP_{n-2}\{T_{n-2}^u T_{n-1}^{v} \mathcal{I}_n \} = \PP_n\{\text{$Q$ is isotropic given}\ m_0=m_1=1\}\,;\\
        \eta_1^{(n)} &= \frac{1}{2} t^{\frac{(n-1)(n-2)}{2}} (1 - t^{n-1}) = \PP_n\{\text{$m_2=1,s_2=-\varepsilon s_0 $ given}\ m_0=1,\, m_1 = 0\}\,;\\
        \eta_2^{(n)} &= t^{\frac{n(n-1)}{2}} = \PP_n\{\text{$m_2=0 $ given}\ m_0=1,\, m_1 = 0\}\,;\\
        \eta_0^{(n)} &= 1 -\eta_1^{(n)} -\eta_2^{(n)}\,.
\end{align*}
Finally, the coefficients $\beta_i^{(n)}$ and $\gamma_i^{(n)}$ are easy to compute for small $n$. In particular, if $n \le 4$, the respective coefficients come from quadratic forms in $\le 2$ variables.

This way of interpreting the recursive procedure from \cite{bhargava2016probability} also brings together two seemingly different approaches for computing probability of isotropy over $\ZZ_p$ and $\RR$ in case of the Gaussian Orthogonal Ensemble. First, both over $\ZZ_p$ and $\RR$ the computation is done for ``Gaussian`` invariant ensembles. Second, all local computations rely on the joint distribution of the canonical form of a quadratic form over the corresponding local field with the difference that eigenvalues over $\RR$ are almost surely distinct, and over $\ZZ_p$ elementary divisors are allowed to repeat.

\section*{Acknowledgements}
The author was funded by ERC LogCorRM (grant no 740900) at University of Oxford. Part of the work was carried out at University of Bristol. The author is very grateful to Jon Keating for his comments on the draft of this paper as well as his support and guidance, and to Zeev Rudnick for a helpful discussion. The author would also like to thank Melanie Matchett Wood for comments on the draft and for pointing to additional references, and the referee for their suggestions and corrections. Finally, the author is very grateful to Evan O'Dorney for reaching out and providing a thorough review of the paper and pointing out several errors and oversights.

\bibliographystyle{alpha}
\bibliography{biblio}{}

\newcommand{\etalchar}[1]{$^{#1}$}
\begin{thebibliography}{BCF{\etalchar{+}}16b}

\bibitem[BCF16a]{bhargava2016proportion}
Manjul Bhargava, John Cremona, and Tom Fisher.
\newblock The proportion of plane cubic curves over {$\QQ$} that everywhere locally have a point.
\newblock {\em International Journal of Number Theory}, 12(04):1077--1092, 2016.

\bibitem[BCF{\etalchar{+}}16b]{bhargava2016probability}
Manjul Bhargava, John~E Cremona, Tom Fisher, Nick~G Jones, and Jonathan~P Keating.
\newblock What is the probability that a random integral quadratic form in n variables has an integral zero?
\newblock {\em International Mathematics Research Notices}, 2016(12):3828--3848, 2016.

\bibitem[BCF21]{bhargava2020proportion}
Manjul Bhargava, John Cremona, and Tom Fisher.
\newblock The proportion of genus one curves over {$\QQ$} defined by a binary quartic that everywhere locally have a point.
\newblock {\em International Journal of Number Theory}, 17(4):903--923, 2021.

\bibitem[Bha14a]{bhargava2014geometric}
Manjul Bhargava.
\newblock The geometric sieve and the density of squarefree values of invariant polynomials.
\newblock {\em arXiv preprint arXiv:1402.0031}, 2014.

\bibitem[Bha14b]{bhargava2014positive}
Manjul Bhargava.
\newblock A positive proportion of plane cubics fail the {H}asse principle.
\newblock {\em arXiv preprint arXiv:1402.1131}, 2014.

\bibitem[BKL{\etalchar{+}}15]{bhargava2013modeling}
Manjul Bhargava, Daniel~M Kane, Hendrik~W Lenstra, Bjorn Poonen, and Eric Rains.
\newblock Modeling the distribution of ranks, {S}elmer groups, and {S}hafarevich--{T}ate groups of elliptic curves.
\newblock {\em Cambridge Journal of Mathematics}, 3(3):275--321, 2015.

\bibitem[BM87]{brent1987determinants}
Richard~P Brent and Brendan~D McKay.
\newblock Determinants and ranks of random matrices over {$Z_m$}.
\newblock {\em Discrete Mathematics}, 66(1-2):35--49, 1987.

\bibitem[BM88]{brent1988determinants}
Richard~P Brent and Brendan~D McKay.
\newblock On determinants of random symmetric matrices over {$Z_m$}.
\newblock {\em Ars Combinatoria A}, 26:57--64, 1988.

\bibitem[Bro93]{brown1993matrices}
William~C Brown.
\newblock {\em Matrices over commutative rings}.
\newblock Marcel Dekker, Inc., 1993.

\bibitem[BSW22a]{bhargava2016squarefree}
Manjul Bhargava, Arul Shankar, and Xiaoheng Wang.
\newblock Squarefree values of polynomial discriminants {I}.
\newblock {\em Inventiones mathematicae}, 228(3):1037--1073, 2022.

\bibitem[BSW22b]{bhargava2022sqf}
Manjul Bhargava, Arul Shankar, and Xiaoheng Wang.
\newblock Squarefree values of polynomial discriminants {II}.
\newblock {\em arXiv preprint arXiv:2207.05592}, 2022.

\bibitem[Car54]{carlitz1954representations2}
Leonard Carlitz.
\newblock Representations by skew forms in a finite field.
\newblock {\em Archiv der Mathematik}, 5(1-3):19--31, 1954.

\bibitem[Cas08]{cassels2008rational}
John William~Scott Cassels.
\newblock {\em Rational quadratic forms}.
\newblock Courier Dover Publications, 2008.

\bibitem[CKL{\etalchar{+}}15]{clancy2015cohen}
Julien Clancy, Nathan Kaplan, Timothy Leake, Sam Payne, and Melanie~Matchett Wood.
\newblock On a {C}ohen--{L}enstra heuristic for {J}acobians of random graphs.
\newblock {\em Journal of Algebraic Combinatorics}, 42(3):701--723, 2015.

\bibitem[DS94]{diaconis1994eigenvalues}
Persi Diaconis and Mehrdad Shahshahani.
\newblock On the eigenvalues of random matrices.
\newblock {\em Journal of Applied Probability}, 31(A):49--62, 1994.

\bibitem[DS19]{destagnol2019rational}
Kevin Destagnol and Efthymios Sofos.
\newblock Rational points and prime values of polynomials in moderately many variables.
\newblock {\em Bulletin des Sciences Math{\'e}matiques}, 156:102794, 2019.

\bibitem[EJV11]{ellenberg2011modeling}
Jordan~S Ellenberg, Sonal Jain, and Akshay Venkatesh.
\newblock Modeling {$\lambda$}-invariants by {$p$}-adic random matrices.
\newblock {\em Communications on Pure and Applied Mathematics}, 64(9):1243--1262, 2011.

\bibitem[Eke91]{ekedahl1991infinite}
Torsten Ekedahl.
\newblock An infinite version of the {C}hinese remainder theorem.
\newblock {\em Rikkyo Daigaku sugaku zasshi}, 40(1):53--59, 1991.

\bibitem[Eva01]{evans2001local}
Steven~N Evans.
\newblock Local fields, {G}aussian measures, and {B}rownian motions.
\newblock {\em Topics in probability and Lie groups: boundary theory}, 28:11--50, 2001.

\bibitem[Eva02]{evans2002elementary}
Steven~N Evans.
\newblock Elementary divisors and determinants of random matrices over a local field.
\newblock {\em Stochastic processes and their applications}, 102(1):89--102, 2002.

\bibitem[FG15]{fulman2015stein}
Jason Fulman and Larry Goldstein.
\newblock Stein’s method and the rank distribution of random matrices over finite fields.
\newblock {\em The Annals of Probability}, 43(3):1274--1314, 2015.

\bibitem[Fis97]{fisher1997note}
Benji Fisher.
\newblock A note on {H}ensel’s lemma in several variables.
\newblock {\em Proceedings of the American Mathematical Society}, 125(11):3185--3189, 1997.

\bibitem[FK19]{fulman2019random}
Jason Fulman and Nathan Kaplan.
\newblock Random {P}artitions and {C}ohen--{L}enstra heuristics.
\newblock {\em Annals of Combinatorics}, 23(2):295--315, 2019.

\bibitem[Ful02]{fulman2002random}
Jason Fulman.
\newblock Random matrix theory over finite fields.
\newblock {\em Bulletin of the American Mathematical Society}, 39(1):51--85, 2002.

\bibitem[FW87]{friedman1987distribution}
Eduardo Friedman and Lawrence~C Washington.
\newblock On the distribution of divisor class groups of curves over a finite field.
\newblock {\em Th{\'e}orie des Nombres/Number Theory Laval}, 1987.

\bibitem[Gam07]{gamburd2007some}
Alex Gamburd.
\newblock Some applications of symmetric functions theory in random matrix theory.
\newblock {\em London Mathematical Society Lecture Note Series}, 341:143, 2007.

\bibitem[GR21]{gorodetsky2019traces}
Ofir Gorodetsky and Brad Rodgers.
\newblock Traces of powers of matrices over finite fields.
\newblock {\em Transactions of the American Mathematical Society}, 2021.

\bibitem[Mac98]{macdonald1998symmetric}
Ian~Grant Macdonald.
\newblock {\em Symmetric functions and {H}all polynomials}.
\newblock Oxford university press, 1998.

\bibitem[Map13]{maples2013cokernels}
Kenneth Maples.
\newblock Cokernels of random matrices satisfy the {C}ohen-{L}enstra heuristics.
\newblock {\em arXiv preprint arXiv:1301.1239}, 2013.

\bibitem[NW22]{nguyen2018random}
Hoi~H Nguyen and Melanie~Matchett Wood.
\newblock Random integral matrices: universality of surjectivity and the cokernel.
\newblock {\em Inventiones mathematicae}, 228(1):1--76, 2022.

\bibitem[Poo18]{poonen2018heuristics}
Bjorn Poonen.
\newblock Heuristics for the arithmetic of elliptic curves.
\newblock In {\em Proceedings of the International Congress of Mathematicians-Rio de}, volume~2, pages 399--414. World Scientific, 2018.

\bibitem[PPVW19]{park2016heuristic}
Jennifer Park, Bjorn Poonen, John Voight, and Melanie~Matchett Wood.
\newblock A heuristic for boundedness of ranks of elliptic curves.
\newblock {\em Journal of the European Mathematical Society}, 21(9):2859--2903, 2019.

\bibitem[PR12]{poonen2012random}
Bjorn Poonen and Eric Rains.
\newblock Random maximal isotropic subspaces and {S}elmer groups.
\newblock {\em Journal of the American Mathematical Society}, 25(1):245--269, 2012.

\bibitem[PS99]{poonen1999cassels}
Bjorn Poonen and Michael Stoll.
\newblock The {C}assels-{T}ate pairing on polarized abelian varieties.
\newblock {\em Annals of Mathematics}, pages 1109--1149, 1999.

\bibitem[VP21]{van2021limits}
Roger Van~Peski.
\newblock Limits and fluctuations of p-adic random matrix products.
\newblock {\em Selecta Mathematica}, 27(5):1--71, 2021.

\bibitem[Woo17]{wood2017distribution}
Melanie Wood.
\newblock The distribution of sandpile groups of random graphs.
\newblock {\em Journal of the American Mathematical Society}, 30(4):915--958, 2017.

\bibitem[Woo19]{wood2019random}
Melanie~Matchett Wood.
\newblock Random integral matrices and the {C}ohen-{L}enstra heuristics.
\newblock {\em American Journal of Mathematics}, 141(2):383--398, 2019.

\bibitem[WS17]{wang2017smith}
Yinghui Wang and Richard~P Stanley.
\newblock The {S}mith normal form distribution of a random integer matrix.
\newblock {\em SIAM Journal on Discrete Mathematics}, 31(3):2247--2268, 2017.

\end{thebibliography}

\end{document}